\newcommand\caba{\sf{CABA}}
\def\ba{{\sf{BA}}}
\def\cama{{\sf{CAMA}}}
\def\ma{{\sf{MA}}}
\def\SL{{\sf{SL}}}
\def\SSL{{\sf{StoneSL}}}
\def\Set{{\sf{Set}}}
\def\KFr{{\sf{KFr}}}
\def\DFr{{\sf{DFr}}}
\def\Stone{{\sf{Stone}}}
\def\Alg{{\sf{Alg}}}
\def\Coalg{{\sf{Coalg}}}
\def\csl{{\sf{CSL}}}
\def\op{{\sf{op}}}
\def\V{\mathcal{V}}
\def\H{\mathcal{H}}
\def\U{\mathcal{U}}
\def\C{\mathcal{C}}
\def\A{\mathcal{A}}
\def\M{\mathcal{M}}
\def\K{\mathcal{K}}
\def\F{\mathcal{F}}
\def\L{\mathcal{L}}
\def\T{\mathcal{T}}
\def\U{\mathcal{U}}
\def\At{{\mathfrak{at}}}
\def\uf{{\mathfrak{uf}}}
\def\Clop{{\mathfrak{clop}}}
\def\down{{\downarrow}}
\title{Duality for Powerset Coalgebras}
\author[G.~Bezhanishvili]{Guram~Bezhanishvili\rsuper{a}}
\address{New Mexico State University}
\email{\{guram,pmorandi\}@nmsu.edu}
\author[L.~Carai]{Luca~Carai\rsuper{b}}
\address{Universit\`a degli Studi di Salerno}
\email{lcarai@unisa.it}
\author[P.~J.~Morandi]{Patrick~J.~Morandi\rsuper{a}}
\subjclass[2010]{03B45; 06E25; 06E15; 06A12.}
\keywords{Modal logic, coalgebra, J\'onsson-Tarski duality, Thomason duality}
\begin{document}

\begin{abstract}
  Let $\caba$ be the category of complete atomic boolean algebras and complete boolean homomorphisms, and let $\csl$ be the category of complete meet-semilattices and complete meet-homomorphisms. We show that the forgetful functor from $\caba$ to $\csl$ has a left adjoint. This allows us to describe an endofunctor $\H$ on $\caba$ such that the category $\Alg(\H)$ of algebras for $\H$ is dually equivalent to the category $\Coalg(\mathcal{P})$ of coalgebras for the powerset endofunctor $\mathcal{P}$ on $\Set$. As a consequence, we derive Thomason duality from Tarski duality, thus paralleling how J\'onsson-Tarski duality was derived from Stone duality in~\cite{Abr88,KKV04}.
\end{abstract}

\maketitle

\section{Introduction}

It is a classic result in modal logic, known as \emph{J\'onsson-Tarski duality}, that the category $\ma$ of modal algebras is dually equivalent to the category $\DFr$ of descriptive frames. This result can be traced back to the work of J\'onsson-Tarski~\cite{JT51}, Halmos~\cite{Hal56}, and Kripke~\cite{Kri63}. In the modern form it was proved by Esakia~\cite{Esa74} and Goldblatt~\cite{Gol76}.\footnote{We point out that Esakia phrased it for the subcategory of descriptive frames where the relation $R$ is reflexive and transitive. Consequently, he worked with the subcategory of modal algebras consisting of closure algebras of McKinsey and Tarski~\cite{MT44}.}

J\'onsson-Tarski duality is a generalization of the celebrated Stone duality between the category $\ba$ of boolean algebras and the category $\Stone$ of Stone spaces. It was observed by Abramsky~\cite{Abr88} and Kupke, Kurz, and Venema~\cite{KKV04} that J\'onsson-Tarski duality can be proved by lifting Stone duality using algebra/coalgebra methods. This can be done by utilizing the classic Vietoris construction (see, e.g.,~\cite[Ch.~III.4]{Joh82}). Indeed, associating with each Stone space $X$ its Vietoris space $\V(X)$ gives rise to an endofunctor $\V:\Stone\to\Stone$ such that $\DFr$ is isomorphic to the category $\Coalg(\V)$ of coalgebras for $\V$. Let $\SL$ be the category of meet-semilattices with top. Then the forgetful functor $\U:\ba\to\SL$ has a left adjoint $\L:\SL\to\ba$. Letting $\K=\L\U$ gives an endofunctor $\K:\ba\to\ba$ such that $\ma$ is isomorphic to the category $\Alg(\K)$ of algebras for $\K$. Moreover, the following diagram commutes up to natural isomorphism, yielding that Stone duality lifts to a dual equivalence between $\Alg(\K)$ and $\Coalg(\V)$. This provides an alternate proof of J\'onsson-Tarski duality. 

\begin{center}
\begin{tikzcd}[column sep = 8pc]
\ba \arrow[r, <->, "\textrm{Stone duality}"] \arrow[d, "\K"'] &  \Stone \arrow[d, "\V"] \\
\ba  \arrow[r, <->, "\textrm{Stone duality}"'] & \Stone 
\end{tikzcd}
\end{center}

Descriptive frames can be thought of as Kripke frames $(X,R)$ equipped with a Stone topology compatible with the relation $R$.
In~\cite{Tho75} Thomason proved a ``discrete version" of J\'onsson-Tarski duality, establishing that the category $\KFr$ of Kripke frames is dually equivalent to the category $\cama$ of complete atomic modal algebras whose modal operator is completely multiplicative (see Sections~\ref{sec:free} and~\ref{sec:coalgebra} for all the undefined notions). 

The same way J\'onsson-Tarski duality generalizes Stone duality, Thomason duality generalizes Tarski duality between the category $\caba$ of complete atomic boolean algebras and the category $\Set$ of sets. It is natural to try to obtain Thomason duality from Tarski duality using algebra/coalgebra methods in the same vein J\'onsson-Tarski duality was obtained from Stone duality in~\cite{Abr88,KKV04}. Surprisingly, such an approach has not yet been undertaken. Our aim is to fill in this gap.

For this purpose, it is natural to replace the Vietoris endofunctor $\V$ on $\Stone$ with the powerset endofunctor $\mathcal{P}$ on $\Set$. It is known (see, e.g.,~\cite[Sec.~9]{Ven07}) that $\KFr$ is isomorphic to $\Coalg(\mathcal{P})$. Thus, the key is to construct an endofunctor $\H$ on $\caba$ that is an analogue of the endofunctor $\K:\ba\to\ba$. We recall that $\K=\L\U$ where $\L:\SL\to\ba$ is left adjoint to the forgetful functor $\U:\ba\to\SL$. A natural analogue of $\SL$ in the complete case is the category $\csl$ of complete meet-semilattices. 
Our main contribution is to show that the forgetful functor $\U:\caba\to\csl$ has a left adjoint $\L:\csl\to\caba$. We then define $\H:\caba\to\caba$ as the composition $\H=\L\U$, and prove that $\Alg(\H)$ is dually equivalent to $\Coalg(\mathcal P)$. Since $\cama$ is isomorphic to $\Alg(\H)$ and $\Coalg(\mathcal P)$ is isomorphic to $\KFr$, Thomason duality follows. 

The paper is organized as follows. In Section~\ref{sec:free} we recall J\'onsson-Tarski duality and how it can be obtained by lifting Stone duality. For this we need to work with the left adjoint $\L:\SL\to\ba$ of the forgetful functor $\U:\ba\to\SL$. 
The standard approach to constructing $\L$ is to take the free boolean algebra over the underlying set of a meet-semilattice with top and quotient it by the relations defining a modal operator $\Box$ (see~\cite[Prop.~3.12]{KKV04}). 
As we will show in Theorem~\ref{prop: H is a reflector}, $\L$ can alternatively be constructed by utilizing Pontryagin duality for semilattices~\cite{HMS74}. Let $\sf StoneSL$ be the category of topological meet-semilattices, where the topology is a Stone topology. Then $\SL$ is dually equivalent to $\sf StoneSL$ (see~\cite[Thm.~3.9]{HMS74} or~\cite[p.~251]{Joh82}). 
We will show that $\L$ can be constructed by taking the boolean algebra of clopen subsets of the Pontryagin dual $M^*:= \hom_\SL(M,2)$ of $M \in \SL$. This shows that $\L:\SL\to\ba$ can be constructed either purely algebraically, by utilizing the existence of free algebras in $\ba$, or using duality, as the boolean algebra of clopens of the Pontryagin dual of a meet-semilattice with top.

In Section~\ref{sec:coalgebra} we show that the same approach applies to the forgetful functor $\U:\caba\to\csl$. Its left adjoint $\L:\csl\to\caba$ can be constructed by taking the free object in $\caba$ over the underlying set of a complete meet-semilattice and then taking the quotient of it by the relations defining a completely multiplicative modal operator $\Box$. We point out that care is needed in constructing the free object in $\caba$ since it is well known that free objects do not exist in the category of complete boolean algebras (see~\cite{Gai64,Hal64}). Nevertheless, free objects in $\caba$ do exist. This can be seen by observing that the Eilenberg-Moore algebras of the double contravariant powerset monad are exactly the objects of $\caba$~\cite{Tay02}, and that categories of algebras for monads have free objects~\cite[Prop.~20.7(2)]{AHS06}. A more concrete construction of free objects in $\caba$ can be given by utilizing the theory of canonical extensions of J\'onsson and Tarski~\cite{JT51}. Indeed, in Theorem~\ref{prop: free caba} we will prove that the free object in $\caba$ over a set $X$ is the canonical extension $F^\sigma$ of the free boolean algebra $F$ over $X$. We then quotient $F^\sigma$ by the complete congruence generated by the relations defining a completely multiplicative modal operator, yielding the desired $\L:\csl\to\caba$ (see Theorem~\ref{prop: H is a reflector Venema def}).

An alternate construction of $\L:\csl\to\caba$ that parallels the alternate construction of $\L:\SL\to\ba$ can be given by taking the powerset of $\hom_\csl(M,2)$ for each $M\in\csl$. Since $\hom_\csl(M,2)$ is isomorphic to the order-dual of $M$, this amounts to taking the powerset of $M$.\footnote{We thank one of the referees for suggesting this approach.} Therefore, we again obtain that the left adjoint can be constructed either purely algebraically, utilizing that free objects exist in $\caba$, or else using the powerset construction. Thus, we arrive at the following diagram, which parallels the constructions of the left adjoints $\L:\SL\to\ba$ and $\L:\csl\to\caba$:

\medskip

\begin{center}
\begin{tabular}{|l|l|l|} \hline
\begin{tabular}{l}Forgetful functor\end{tabular} & \begin{tabular}{l}Left adjoint\end{tabular} & \begin{tabular}{l}Location\end{tabular} \\ \hline
\begin{tabular}{l}$\U : \ba \to \SL$ \\  \phantom{algebraic} \\  \phantom{dual} \end{tabular}
& 
\begin{tabular}{l}$\mathcal{L} : \SL \to \ba$ \\ algebraic construction:  ${\sf Free}_{\ba}(M)/{\equiv}$ \\ dual construction: $\Clop(\hom_{\SL}(M,2))$ \end{tabular} 
& 
\begin{tabular}{l} \\ \cite{KKV04} \\ Theorem~\ref{prop: H is a reflector} \end{tabular}
\\ \hline
\begin{tabular}{l}$\U : \caba \to \csl$ \\ \phantom{algebraic} \\ \phantom{dual} \end{tabular}
& 
\begin{tabular}{l}$\mathcal{L} : \csl \to \caba$  \\ algebraic construction:  ${\sf Free}_{\caba}(M)/{\equiv}$ \\ dual construction: $\wp(\hom_{\csl}(M,2)) \cong \wp(M)$ \end{tabular}
& 
\begin{tabular}{l} \ \\ Theorem~\ref{prop: H is a reflector Venema def} \\ Theorem~\ref{thm:alternate L} \end{tabular}
\\ \hline
\end{tabular}
\end{center}

\medskip

In Section~\ref{sec:thomason} we define the endofunctor $\H:\caba\to\caba$ as the composition $\H=\L\U$. In Theorem~\ref{thm:Hwp=wpP} we prove that the following diagram commutes (up to natural isomorphism). 

\begin{center}
\begin{tikzcd}[column sep = 8pc]
\caba \arrow[r, <->, "\textrm{Tarski duality}"] \arrow[d, "\H"'] &  \Set \arrow[d, "\mathcal P"] \\
\caba  \arrow[r, <->, "\textrm{Tarski duality}"'] & \Set 
\end{tikzcd}
\end{center}
This paves the way towards proving that the category $\Alg(\H)$ of algebras for $\H$ is dually equivalent to the category $\Coalg(\mathcal P)$ of coalgebras for $\mathcal P$ (see Theorem~\ref{cor: main}).

It is well known that 
J\'onsson-Tarski and Thomason dualities are connected through the canonical extension and forgetful functors $(-)^\sigma:\ma\to\cama$ and $\U:\DFr\to\KFr$, making the following diagram commutative.
\begin{center}
\begin{tikzcd}[column sep = 8pc]
\ma \arrow[r, <->, "\textrm{J\'onsson-Tarski duality}"] \arrow[d, "(-)^\sigma"'] &  \DFr \arrow[d, "\U"] \\
\cama  \arrow[r, <->, "\textrm{Thomason duality}"'] & \KFr 
\end{tikzcd}
\end{center}
We conclude the paper by Remark~\ref{rem: canonical ext on algebras} in which we show that there are analogous canonical extension and forgetful functors $(-)^\sigma:\Alg(\K)\to\Alg(\H)$ and $\U:\Coalg(\V) \to\Coalg(\mathcal{P})$ that make a similar diagram commutative.

\section{Coalgebraic approach to J\'onsson-Tarski duality}\label{sec:free}

In this section we give a brief account of J\'onsson-Tarski duality, and provide a construction of the left adjoint $\L:\SL\to\ba$ of the forgetful functor $\U:\ba\to\SL$ which is alternative to~\cite[Prop.~3.12]{KKV04}. This we do by utilizing Pontryagin duality for semilattices~\cite{HMS74}. We start by recalling the definition of a modal algebra.

\begin{defi}
A \emph{modal algebra} is a pair $(B,\Box)$ where $B$ is a boolean algebra and $\Box$ is a unary function on $B$ preserving finite meets. A \emph{modal algebra homomorphism} between modal algebras $(B_1,\Box_1)$ and $(B_2,\Box_2)$ is a boolean homomorphism $\alpha:B_1\to B_2$ such that $\alpha(\Box_1 a)=\Box_2\alpha(a)$ for each $a\in B_1$. Let $\ma$ be the category of modal algebras and modal algebra homomorphisms. 
\end{defi}

A subset of a topological space $X$ is {\em clopen} if it is both closed and open, and $X$ is {\em zero-dimensional} if $X$ has a basis of clopen sets. A {\em Stone space} is a zero-dimensional compact Hausdorff space. 

For a binary relation $R$ on $X$, we write 
\[
R[x]:=\{y \in X \mid xRy\} \ \mbox{ and } \ R^{-1}[U]:=\{x\in X \mid \exists u\in U \mbox{ with } xRu\}
\] 
for the $R$-image of $x\in X$ and $R$-inverse image of $U\subseteq X$.

\begin{defi}
A \emph{descriptive frame} is a pair $(X,R)$ where $X$ is a Stone space and $R$ is a binary relation on $X$ such that $R[x]$ is closed for each $x\in X$ and $R^{-1}[U]$ is clopen for each clopen $U \subseteq X$.
\end{defi}

Such relations are often called \emph{continuous relations} for the following reason. Let $\V(X)$ be the \emph{Vietoris space} of $X$. We recall (see, e.g.,~\cite[Sec.~III.4]{Joh82}) that $\V(X)$ is the set of closed subsets of $X$ topologized by the subbasis $\{\Box_U,\Diamond_V\mid U,V \mbox{ open in } X\}$ where
\[
\Box_U=\{ F\in\V(X) \mid F \subseteq U\} \ \mbox{ and } \ \Diamond_V=\{F\in\V(X) \mid F\cap V\ne\varnothing\}.
\] 
Then $R$ is continuous iff the associated map $\rho_R:X\to\V(X)$, given by $\rho_R(x)=R[x]$, is a well-defined continuous map (that $\rho_R$ is well defined follows from (i), and that it is continuous from (ii)). 

Let $\DFr$ be the category of descriptive frames and continuous p-morphisms, where a \textit{p-morphism} between $(X_1,R_1)$ and $(X_2,R_2)$ is a map $f:X_1 \to X_2$ satisfying $f[R_1[x]]=R_2[f(x)]$ for each $x\in X_1$. 

\begin{thm} [J\'onsson-Tarski duality]
$\ma$ is dually equivalent to $\DFr$.
\end{thm}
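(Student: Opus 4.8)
The plan is to establish the dual equivalence by constructing a pair of contravariant functors between $\ma$ and $\DFr$ and exhibiting natural isomorphisms between their composites and the respective identity functors. The construction refines Stone duality, so I would first recall the Stone functors $(-)^*:\ba\to\Stone$ and $(-)_*:\Stone\to\ba$: for a boolean algebra $B$, $B^*$ is the space of ultrafilters of $B$ with the topology generated by the sets $\widehat{a}=\{\mathfrak{u}\in B^*\mid a\in\mathfrak{u}\}$; for a Stone space $X$, $X_*$ is the boolean algebra of clopen subsets of $X$. These are quasi-inverse contravariant equivalences, with unit $a\mapsto\widehat{a}$ and counit $x\mapsto\{U\in X_*\mid x\in U\}$.

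Next I would upgrade these functors to the modal/descriptive setting. Given a descriptive frame $(X,R)$, I would show that $R^{-1}[-]$ restricts to an operation on clopens (this is part of the definition of a descriptive frame) that preserves finite joins, hence the De~Morgan dual $\Box_R U := X\setminus R^{-1}[X\setminus U]$ preserves finite meets; this makes $((X,R)_*,\Box_R)$ a modal algebra, and one checks that a continuous p-morphism $f:(X_1,R_1)\to(X_2,R_2)$ induces a modal algebra homomorphism $f^{-1}$ by the p-morphism condition. Conversely, given a modal algebra $(B,\Box)$, I would define a relation $R_\Box$ on $B^*$ by $\mathfrak{u}\, R_\Box\, \mathfrak{v}$ iff $\Box^{-1}[\mathfrak{u}]\subseteq\mathfrak{v}$ (equivalently, $\{a : \Box a\in\mathfrak{u}\}\subseteq\mathfrak{v}$), and verify that $(B^*,R_\Box)$ is a descriptive frame: the condition that $R_\Box[\mathfrak{u}]$ is closed and $R_\Box^{-1}[\widehat a]$ is clopen (indeed equals $\widehat{\Diamond a}$ where $\Diamond = \neg\Box\neg$) follows from a standard ultrafilter/prime-filter argument. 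One then checks functoriality on boolean homomorphisms commuting with $\Box$.

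Finally I would verify that the Stone unit and counit are compatible with the modal structure: the isomorphism $a\mapsto\widehat{a}$ from $B$ to the clopens of $B^*$ intertwines $\Box$ with $\Box_{R_\Box}$, i.e.\ $\widehat{\Box a}=\Box_{R_\Box}\widehat{a}$; and the homeomorphism $X\cong (X_*)^*$ carries $R$ to $R_{\Box_R}$, i.e.\ $x R y$ iff the principal ultrafilters at $x$ and $y$ are $R_{\Box_R}$-related. Both reduce to unravelling definitions plus compactness (to recover $R$ from its clopen traces, one uses that $R[x]$ is closed and the clopens separate points). Since these maps are already the Stone natural isomorphisms, naturality is inherited; it remains only to observe they are morphisms in $\ma$ and $\DFr$ respectively.

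The main obstacle, and the heart of the proof, is showing that $(B^*,R_\Box)$ is genuinely a \emph{descriptive} frame --- concretely, that $R_\Box[\mathfrak u]$ is closed for every ultrafilter $\mathfrak u$ and that $R_\Box^{-1}$ sends clopens to clopens --- and, dually, that $R$ is recovered from $\Box_R$ on the clopen algebra. The forward direction uses that $R_\Box[\mathfrak u]=\bigcap\{\widehat a : \Box a\in\mathfrak u\}$ is an intersection of clopens; the identity $R_\Box^{-1}[\widehat a]=\widehat{\Diamond a}$ requires the Prime Filter Theorem to extend a finitely consistent set $\{\Box b : \Box b \in \mathfrak u\}\cup\{a\}$ to an ultrafilter. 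The recovery of $R$ from $\Box_R$ is where the descriptive (topological) hypotheses are essential: without closedness of $R[x]$ and clopenness of $R^{-1}[U]$, distinct relations could induce the same modal operator. Everything else is routine verification of the functor axioms and of the fact that the Stone equivalence transports along these enrichments.
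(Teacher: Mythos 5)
Your proposal is a correct outline of the classical direct proof, but it is not the route this paper takes: the paper states J\'onsson--Tarski duality as a known result (citing Esakia and Goldblatt) and then devotes Section~2 to deriving it \emph{indirectly}, by lifting Stone duality --- one shows that the square relating $\ba$, $\Stone$, the endofunctor $\K=\L\U$ and the Vietoris endofunctor $\V$ commutes up to natural isomorphism, concludes that $\Alg(\K)$ is dually equivalent to $\Coalg(\V)$, and then invokes the isomorphisms $\ma\cong\Alg(\K)$ and $\DFr\cong\Coalg(\V)$. Your approach constructs the modal enrichment of the Stone functors by hand: $\Box_R U=X\setminus R^{-1}[X\setminus U]$ on clopens, the canonical relation $\mathfrak{u}\,R_\Box\,\mathfrak{v}$ iff $\Box^{-1}[\mathfrak{u}]\subseteq\mathfrak{v}$ on ultrafilters, and the verification that the Stone unit and counit intertwine $\Box$ with $\Box_{R_\Box}$ and $R$ with $R_{\Box_R}$; you also correctly isolate the two genuinely nontrivial points, namely $R_\Box^{-1}[\widehat{a}]=\widehat{\Diamond a}$ (which needs the prime filter theorem) and the recovery of $R$ from its action on clopens (which needs closedness of $R[x]$). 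One small slip: the set to be extended to an ultrafilter should be $\{b : \Box b\in\mathfrak{u}\}\cup\{a\}$, not $\{\Box b : \Box b\in\mathfrak{u}\}\cup\{a\}$, though your surrounding description makes the intent clear. The trade-off between the two approaches is that yours is self-contained and elementary, while the paper's coalgebraic factorization localizes all the modal-specific work in the single natural isomorphism $\Clop\circ\V\cong\K\circ\Clop$ and is precisely the template the paper then transports to the $\caba$/$\Set$ setting to obtain Thomason duality.
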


J\'onsson-Tarski duality generalizes Stone duality between the category $\ba$ of boolean algebras and boolean homomorphisms and the category $\Stone$ of Stone spaces and continuous maps. We recall that the contravariant functors $\uf:\ba\to^\op\Stone$ and $\Clop:\Stone\to^\op\ba$ yielding Stone duality are constructed as follows.\footnote{To easily distinguish between covariant and contravariant functors, following the suggestion of one of the referees, we write $F:{\sf C}\to^\op{\sf D}$ for a contravariant functor $F$.}
The functor $\uf$ assigns to each boolean algebra $A$ the set $\uf(A)$ of ultrafilters of $A$ topologized by the basis $\{\beta_A(a)\mid a\in A\}$ where 
\[
\beta_A(a)=\{x\in\uf(A)\mid a\in x\}.
\] 
To each boolean homomorphism $\alpha:A\to B$, the functor $\uf$ assigns $\uf(\alpha):=\alpha^{-1}:\uf(B)\to\uf(A)$. 
The functor $\Clop$ assigns to each Stone space $X$ the boolean algebra $\Clop(X)$ of clopen subsets of $X$, and to each continuous map $f:X\to Y$ the boolean homomorphism $\Clop(f):=f^{-1}:\Clop(Y)\to\Clop(X)$. 

One unit $\beta : 1_{\ba} \to \Clop \circ \uf$ of this dual equivalence is given by the Stone maps $\beta_A:A\to\Clop(\uf(A))$ for $A\in\ba$, and the other unit $\eta : 1_{\Stone} \to \uf\circ\Clop$ by the homeomorphisms $\eta_X:X\to\uf(\Clop(X))$ for $X\in\Stone$, which are given by 
\[
\eta_X(x)=\{U\in\Clop(X)\mid x\in U\}.
\] 

These functors naturally generalize to yield J\'onsson-Tarski duality. As we pointed out in the introduction, an alternative approach to J\'onsson-Tarski duality is by lifting Stone duality using algebra/coalgebra methods. We recall that the Vietoris construction extends to an endofunctor $\V:\Stone\to\Stone$ by sending a continuous map $f:X\to Y$ to $\V(f):\V(X)\to\V(Y)$ given by $\V(f)(G)=f[G]$ for each $G\in\V(X)$. We next consider the category $\Coalg(\V)$ of coalgebras for $\V$. For this we recall the notion of a coalgebra for an endofunctor (see, e.g.,~\cite[Def.~9.1]{Ven07}).

\begin{defi}\label{def:coalgebra}
\begin{enumerate}
\item[]
\item A {\em coalgebra} for an endofunctor $\T:\sf{C} \to \sf{C}$ is a pair $(A,f)$ where $A$ is an object of the category $\sf{C}$ and $f:A \to \T(A)$ is a $\sf{C}$-morphism.
\item A {\em morphism} between two coalgebras $(A_1, f_1)$ and $(A_2,f_2)$ for $\T$ is a $\sf{C}$-morphism $\alpha : A_1 \to A_2$ such that the following square is commutative.
\[
\begin{tikzcd}[column sep = 5pc]
A_1 \arrow[d, "f_1"'] \arrow[r, "\alpha"] &  A_2 \arrow[d, "f_2"] \\
\T(A_1) \arrow[r, "\T(\alpha)"'] &\T(A_2)
\end{tikzcd}
\]
\item Let $\Coalg(\T)$ be the category whose objects are coalgebras for $\T$ and whose morphisms are morphisms of coalgebras.
\end{enumerate}
\end{defi}

The dual endofunctor $\K:\ba\to\ba$ of the Vietoris endofunctor $\V:\Stone\to\Stone$ was described in~\cite{KKV04}. Let $\SL$ be the category of meet-semilattices with top and meet-homomorphisms preserving top. Then $\K$ is the composition $\L\U$, where $\L:\SL\to\ba$ is the left adjoint of the forgetful functor $\U:\ba\to\SL$. In~\cite[Prop.~3.12]{KKV04} the left adjoint is constructed algebraically, by taking the free boolean algebra $F$ over the underlying set of $M\in\SL$ and then taking the quotient of $F$ by the relations remembering that $M$ is a meet-semilattice with top. We give an alternative description of $\L$, which utilizes Pontryagin duality for semilattices~\cite{HMS74}, which we briefly recall next. 

Let $\SSL$ be the category whose objects are topological meet-semilattices, where the topology is a Stone topology, and whose morphisms are continuous meet-homomorphisms. Pontryagin duality for $\SL$ establishes a dual equivalence between $\SL$ and $\SSL$.
The contravariant functor $(-)^* : \SL \to^\op \SSL$ sends $M$ to its dual $M^* := \hom_{\SL}(M, 2)$, where $2 = \{0,1\}$ is the two-element chain and meet on $M^*$ is pointwise meet. If $2$ is given the discrete topology and $2^M$ the product topology, then $M^*$ is easily seen to be a closed subspace of $2^M$, and so the subspace topology is a Stone topology. Moreover, pointwise meet is continuous, and hence $M^*\in\SSL$. On morphisms, if $\sigma : M \to N$ is an $\SL$-morphism, then $\sigma^* : N^* \to M^*$ is defined by $\sigma^*(\gamma) = \gamma \circ \sigma$. The contravariant functor in the other direction sends $A \in \SSL$ to $A^*:=\hom_{\SSL}(A, 2)$ and $\sigma : A \to B$ to $\sigma^* : B^* \to A^*$, defined in the same way as the previous functor. Finally, one natural isomorphism sends each $M \in \SL$ to its double dual $M^{**}$ by sending $m$ to the map $\sigma \mapsto \sigma(m)$ for each $m \in M$ and $\sigma \in M^*$. The other natural isomorphism sends each $A \in \SSL$ to its double dual $A^{**}$ and is given by the same formula. 

\begin{thm} \label{prop: H is a reflector}
Associating with each $M \in \SL$ the boolean algebra $\Clop(M^*)$ of clopen subsets of its dual $M^*$ yields an alternative description of the functor $\mathcal{L}:\SL \to \ba$ that is left adjoint to the forgetful functor $\U:\ba \to \SL$.
\end{thm}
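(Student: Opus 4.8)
The plan is to verify directly that $M\mapsto\Clop(M^*)$ underlies a functor $\SL\to\ba$ which is left adjoint to the forgetful functor $\U:\ba\to\SL$; since left adjoints are unique up to natural isomorphism and $\L$ is already known to exist by~\cite[Prop.~3.12]{KKV04}, this identifies the two. Functoriality is immediate: $\Clop((-)^*)$ is the composite of the contravariant functor $(-)^*:\SL\to^\op\SSL$, the (covariant) forgetful functor $\SSL\to\Stone$, and the contravariant functor $\Clop:\Stone\to^\op\ba$, so it is covariant, sending a morphism $\sigma:M\to N$ to $(\sigma^*)^{-1}:\Clop(M^*)\to\Clop(N^*)$.

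For the adjunction I would exhibit a bijection $\hom_\ba(\Clop(M^*),B)\cong\hom_\SL(M,\U B)$ natural in $M\in\SL$ and $B\in\ba$, by composing three standard bijections. First, Stone duality identifies $\hom_\ba(\Clop(M^*),B)$ with the set of continuous maps $\uf(B)\to M^*$, where $M^*$ is a Stone space as a closed subspace of $2^M$. Second, since $M^*=\hom_\SL(M,2)$ sits inside $2^M$ with the product topology, the universal property of the product identifies a continuous map $\uf(B)\to M^*$ with an $M$-indexed family $(c_m)_{m\in M}$ of clopen subsets of $\uf(B)$ satisfying exactly the equations $c_{m\wedge n}=c_m\cap c_n$ and $c_\top=\uf(B)$ that carve $M^*$ out of $2^M$. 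Third, $\Clop(\uf B)\cong B$ turns such a family into a map $g:M\to B$ with $g(m\wedge n)=g(m)\wedge g(n)$ and $g(\top)=\top$, that is, an $\SL$-morphism $M\to\U B$. The composite is natural in both variables, and under it the unit $M\to\U\Clop(M^*)$ comes out as $m\mapsto\hat m:=\{f\in M^*\mid f(m)=1\}$; one then checks $\widehat{m\wedge n}=\hat m\cap\hat n$, $\hat\top=M^*$, and (using compactness of $M^*$) that the $\hat m$ generate $\Clop(M^*)$ as a boolean algebra.

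A parallel route, directly matching the algebraic construction ${\sf Free}_{\ba}(M)/{\equiv}$ of~\cite[Prop.~3.12]{KKV04}, is to Stone-dualize that quotient: the free boolean algebra $F$ on the underlying set of $M$ has Stone dual $2^M$, and the congruence $\equiv$ generated by the relations $[m\wedge n]=[m]\wedge[n]$ and $[\top]=\top$ corresponds to the closed subspace of those $f:M\to 2$ with $f(m\wedge n)=f(m)\wedge f(n)$ and $f(\top)=1$ --- precisely $M^*$ --- so $F/{\equiv}$ has dual space $M^*$ and hence $F/{\equiv}\cong\Clop(M^*)$. The only step demanding real care, on either route, is the interaction between the subspace-of-a-product topology on $M^*$ and its semilattice structure, which is what makes ``continuous map into $M^*$'' coincide with ``$\SL$-morphism out of $M$''; the remaining items --- functoriality of $\Clop((-)^*)$, naturality of the bijection, and the identification with $\L$ by uniqueness of adjoints --- are formal.
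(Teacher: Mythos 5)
Your main route is the paper's proof in different packaging, and it is correct. The paper works with the universal-arrow form of the adjunction: it defines the unit $i_M(m)=\{\sigma\in M^*\mid\sigma(m)=1\}$ (your $\hat m$) and, given an $\SL$-morphism $\gamma:M\to A$, produces the required $\tau:\Clop(M^*)\to A$ as $\beta_A^{-1}\circ\Clop(\gamma^*)$, where $\gamma^*$ is the Pontryagin dual of $\gamma$ restricted to $\hom_{\ba}(A,2)=\uf(A)$; uniqueness comes from $i_M[M]$ generating $\Clop(M^*)$ as a boolean algebra, which is exactly the generation fact you flag. Your natural bijection $\hom_\ba(\Clop(M^*),B)\cong\hom_\SL(M,\U B)$ encodes the same correspondence: the middle term, a continuous map $\uf(B)\to M^*$, is precisely that restricted dual map, and your passage through the universal property of the product $2^M$ replaces the paper's appeal to functoriality of $(-)^*$ on morphisms. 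That is a presentational difference, though your version does isolate more explicitly the one point needing care, namely that continuity into $M^*\subseteq 2^M$ amounts to an $\SL$-morphism out of $M$. Your second route --- Stone-dualizing the quotient ${\sf Free}_{\ba}(M)/{\equiv}$ of~\cite[Prop.~3.12]{KKV04} by observing that the free boolean algebra on the set $M$ has dual space $2^M$ and that the generating relations cut out exactly the closed subspace $M^*$ --- is genuinely different and does not appear in the paper. It buys a direct reconciliation of the algebraic and dual constructions of $\L$ (rather than identifying them only a posteriori via uniqueness of adjoints), at the cost of checking that the congruence generated by the relations corresponds dually to the intersection of the closed sets where each relation holds, and that the resulting isomorphism carries the unit of one construction to the unit of the other. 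Both routes are sound.
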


\begin{proof}
Let $M\in\SL$. Define $i_M : M \to \Clop(M^*)$ by $i_M(m) = \{ \sigma \in M^* \mid \sigma(m) = 1\}$. It is straightforward to see that $i_M$ is a well-defined $\SL$-morphism. By~\cite[p.~89]{Mac71} it is enough to show that for each $A\in\ba$ and $\SL$-morphism $\gamma : M \to A$ there is a unique $\ba$-morphism $\tau : \Clop(M^*) \to A$ such that $\tau\circ i_M = \gamma$. 
\[
\begin{tikzcd}[column sep = 5pc]
M \arrow[r, "i_M"] \arrow[dr, "\gamma"'] & \Clop(M^*) \arrow[d, "\tau"] \\
& A
\end{tikzcd}
\]
The map $\gamma^* : \hom_{\SL}(A, 2) \to M^*$ is continuous, so its restriction $\gamma^* : \hom_{\ba}(A, 2) \to M^*$ is continuous. Therefore, $\Clop(\gamma^*) : \Clop(M^*) \to \Clop(\hom_{\ba}(A, 2))$ is a $\ba$-morphism. If $m \in M$, then
\begin{align*}
\Clop(\gamma^*)(i_M(m)) &= (\gamma^*)^{-1}(i_M(m)) = \{ \sigma \in M^* \mid \gamma^*(\sigma) \in i_M(m) \} \\
&= \{ \sigma \in M^* \mid \sigma(\gamma(m)) = 1 \}. 
\end{align*}
Let $\tau : \Clop(M^*) \to A$ be the composition of $\Clop(\gamma^*)$ with the inverse of the natural isomorphism $\beta_A: A \to \Clop(\hom_{\ba}(A, 2))$ of Stone duality which sends $a\in A$ to $\{ \sigma\in~\hom_{\ba}(A, 2) \mid \sigma(a) = 1\}$.\footnote{Here we make the well-known identification of $\uf(A)$ with $\hom_{\ba}(A, 2)$.} 
Then 
\[
\tau(i_M(m)) = \beta_A^{-1} \Clop(\gamma^*) (i_M(m))= \beta_A^{-1}(\{ \sigma \in M^* \mid \sigma(\gamma(m)) = 1 \})= \gamma(m),
\] 
so $\tau \circ i_M = \gamma$. Finally, uniqueness of $\tau$ follows since $i_M[M]$ generates $\Clop(M^*)$ as a boolean algebra.
\end{proof}

\begin{rem}
To see how $\L$ acts on morphisms, if $\sigma : M \to N$ is an $\SL$-morphism, then $\sigma^* : N^* \to M^*$ is a continuous $\SL$-morphism between Stone spaces, so $\Clop(\sigma^*) : \Clop(M^*) \to \Clop(N^*)$ is a $\ba$-morphism by Stone duality.  We then set $\L(\sigma) = \Clop(\sigma^*) :\L(M) \to \L(N)$.
\end{rem}

Let $\K=\L\U$. Then $\K$ is an endofunctor on $\ba$.
\[
\begin{tikzcd}[column sep = 5pc]
\ba \arrow[rr, bend left = 15, "\K"] \arrow[r, "\U"'] & \SL \arrow[r, "\L"'] & \ba
\end{tikzcd}
\]
Let $\Alg(\K)$ be the category of algebras for $\K$ (see~\cite[Def.~5.37]{AHS06}). We recall that for an endofunctor $\T$, algebras for $\T$ are defined by reversing the arrows in the definition of coalgebras for $\T$. Since $\K$ is dual to $\V$, we have that $\Alg(\K)$ is dually equivalent to $\Coalg(\V)$. Because $\Alg(\K)$ is isomorphic to $\ma$ and $\Coalg(\V)$ is isomorphic to $\DFr$, this gives an alternate proof of J\'onsson-Tarski duality (see~\cite{KKV04}).

\section{Two constructions of the left adjoint \texorpdfstring{$\L : \csl \to \caba$}{L : CSL --> CABA}}\label{sec:coalgebra}

Let $\KFr$ be the category of Kripke frames and p-morphisms. Forgetting the topology of a descriptive frame yields the forgetful functor $\U:\DFr\to\KFr$. To describe the modal algebras corresponding to Kripke frames, we recall the notion of a completely multiplicative modal operator.

\begin{defi}
A modal operator $\Box$ on a complete boolean algebra $B$ is \emph{completely multiplicative} if $\Box\left(\bigwedge S\right)=\bigwedge\{\Box s\mid s\in S\}$ for each $S\subseteq B$. Let $\cama$ be the category whose objects are complete atomic modal algebras with completely multiplicative $\Box$, and whose morphisms are complete modal algebra homomorphisms. 
\end{defi}

\begin{thm} [Thomason duality] 
$\cama$ is dually equivalent to $\KFr$. 
\end{thm}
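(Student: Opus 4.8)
The result is classically due to Thomason~\cite{Tho75}; the plan is to recover it by lifting Tarski duality to the algebra/coalgebra level, in exact parallel with the coalgebraic derivation of J\'onsson-Tarski duality from Stone duality sketched above. Recall that Tarski duality is the dual equivalence between $\caba$ and $\Set$ whose contravariant functors are $\wp : \Set \to^\op \caba$, sending a set $X$ to its powerset and a map $f$ to $f^{-1}$, and $\At : \caba \to^\op \Set$, sending a complete atomic boolean algebra to its set of atoms and a complete boolean homomorphism to the map it induces on atoms (the restriction to atoms of its left adjoint); the two units are the isomorphisms $X \cong \At(\wp(X))$, $x \mapsto \{x\}$, and $B \cong \wp(\At(B))$, $b \mapsto \{a \in \At(B) \mid a \le b\}$.

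First I would record the two isomorphisms of categories that recast the two sides of Thomason duality in coalgebraic and algebraic form. On the one hand, $\KFr \cong \Coalg(\mathcal P)$: a Kripke frame $(X,R)$ is sent to the coalgebra $(X,\rho_R)$ with $\rho_R(x) = R[x]$, and the p-morphism condition $f[R_1[x]] = R_2[f(x)]$ is precisely commutativity of the coalgebra-morphism square for $\mathcal P$. On the other hand, $\cama \cong \Alg(\H)$, where $\H = \L\U$ for the left adjoint $\L : \csl \to \caba$ of the forgetful functor $\U : \caba \to \csl$: by the adjunction, a $\caba$-morphism $h : \H(B) = \L\U(B) \to B$ transposes to a $\csl$-morphism $\U(B) \to \U(B)$, i.e.\ to a complete meet-homomorphism $B \to B$, i.e.\ to a completely multiplicative modal operator $\Box$ on $B$; one then checks that under this correspondence $\Alg(\H)$-morphisms are exactly complete modal algebra homomorphisms.

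With these identifications in place, the core of the argument is to show that Tarski duality lifts, i.e.\ that the square
\[
\begin{tikzcd}[column sep = 6pc]
\caba \arrow[r, <->, "\textrm{Tarski duality}"] \arrow[d, "\H"'] & \Set \arrow[d, "\mathcal P"] \\
\caba \arrow[r, <->, "\textrm{Tarski duality}"'] & \Set
\end{tikzcd}
\]
commutes up to natural isomorphism, equivalently that $\H \circ \wp \cong \wp \circ \mathcal P$ naturally. Here it helps to use the dual description of $\L$ as $M \mapsto \wp(\hom_{\csl}(M,2))$: a complete meet-homomorphism $M \to 2$ is determined by the principal up-set $\{m \mid \sigma(m) = 1\}$ it inverts, so $\hom_{\csl}(M,2)$ is the order-dual of $M$ and hence $\L(M) \cong \wp(M)$. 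Thus $\H(\wp(X)) = \L(\wp(X)) \cong \wp(\wp(X)) = \wp(\mathcal P(X))$, and I would then verify that this isomorphism is natural in $X$. A standard transfer lemma (as in~\cite{KKV04}) says that a dual equivalence together with such a commuting square of endofunctors induces a dual equivalence $\Alg(\H) \to^\op \Coalg(\mathcal P)$. Composing this with $\cama \cong \Alg(\H)$ and $\KFr \cong \Coalg(\mathcal P)$ yields the stated duality.

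The step I expect to be the main obstacle is the construction of the left adjoint $\L : \csl \to \caba$, and in particular the existence of the relevant free objects: free complete boolean algebras do not exist, so one cannot blindly imitate the $\SL \to \ba$ case. The way around this is that free objects \emph{do} exist in $\caba$---for instance because $\caba$ is the category of Eilenberg-Moore algebras for the double contravariant powerset monad, or, concretely, because the free $\caba$ over a set $X$ is the canonical extension $F^\sigma$ of the free boolean algebra $F$ over $X$---after which one quotients by the complete congruence encoding complete multiplicativity of $\Box$ to obtain $\L$. Verifying that this $\L$ is genuinely left adjoint to $\U$, and that the isomorphism $\H \circ \wp \cong \wp \circ \mathcal P$ is natural, are the remaining technical points; granting them, the passage to Thomason duality is purely formal.
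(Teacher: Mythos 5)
Your proposal is correct and follows essentially the same route as the paper: the isomorphisms $\KFr \cong \Coalg(\mathcal{P})$ and $\cama \cong \Alg(\H)$, the construction of $\L : \csl \to \caba$ (via free objects in $\caba$ given by canonical extensions, or dually via $\L(M) \cong \wp(M)$), the natural isomorphism $\H \circ \wp \cong \wp \circ \mathcal{P}$, and the standard lifting lemma are exactly the ingredients the paper assembles. The only cosmetic difference is that you obtain the correspondence between $\Alg(\H)$-structures and completely multiplicative operators by transposing along the adjunction, whereas the paper verifies it directly from the universal property of $\L$; these are the same argument.
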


Thomason duality generalizes Tarski duality between $\caba$ and $\Set$ the same way J\'onsson-Tarski duality generalizes Stone duality. We recall that $\caba$ is the category of complete atomic boolean algebras and complete boolean homomorphisms and $\Set$ is the category of sets and functions.  The contravariant functors of Tarski duality are $\wp:\Set\to^\op\caba$ and $\At:\caba\to^\op\Set$. The functor $\wp$ assigns to each set $X$ the powerset $\wp(X)$ and to each function $f:X\to Y$ its inverse image $f^{-1}:\wp(Y)\to\wp(X)$. The functor $\At$ assigns to each $A\in\caba$ its set of atoms. If $\alpha:A\to B$ is a complete boolean homomorphism, it has a left adjoint $\alpha^*:B\to A$, which sends atoms to atoms, and the functor $\At$ assigns to $\alpha$ the function $\alpha^*:\At(B)\to\At(A)$. One unit $\varepsilon : 1_{\Set} \to \At\circ\wp$ of this dual equivalence is given by $\varepsilon_X(x)=\{x\}$ for each $x \in X \in \Set$, and the other unit $\vartheta : 1_{\caba} \to \wp \circ \At$ by $\vartheta_A(a)={\downarrow}a\cap \At(A)$ for each $a\in A \in \caba$.

To derive Thomason duality from Tarski duality the same way J\'onsson-Tarski duality was derived from Stone duality, we need to replace the Vietoris endofunctor $\V$ on $\Stone$ with the powerset endofunctor $\mathcal P$ on $\Set$. We recall that the endofunctor $\mathcal{P}:\Set \to \Set$ associates to each set $X$ its powerset $\mathcal{P}(X)$ and to each function $f:X \to Y$ the function $\mathcal{P}(f):\mathcal{P}(X) \to \mathcal{P}(Y)$ that maps each subset $S \subseteq X$ to its direct image $f[S]$. We also need to replace the endofunctor $\K:\ba\to\ba$ with an appropriate endofunctor $\H:\caba\to\caba$. 

To describe $\H$, we need to construct the left adjoint to $\U:\caba\to\csl$, where $\csl$ is the category of complete meet-semilattices and complete meet-homomorphisms. As in the previous section, this can be done purely algebraically or using duality. To construct $\H$ algebraically, we need that free objects exist in $\caba$. Care is needed here since
it is a well-known result of Gaifman~\cite{Gai64} and Hales~\cite{Hal64} that free objects do not exist in the category of complete boolean algebras and complete boolean homomorphisms. On the other hand, free objects do exist in $\caba$, and this can be seen by observing that the Eilenberg-Moore algebras of the double contravariant powerset monad are exactly the objects of $\caba$~\cite{Tay02}, and that categories of algebras for monads have free objects~\cite[Prop.~20.7(2)]{AHS06}. 

A more concrete construction of free objects in $\caba$ can be given utilizing the theory of canonical extensions. It is well known that free objects over any set exist in the category of complete and completely distributive lattices (see Markowski~\cite{Mar79} and Dwinger~\cite[Thm.~4.2]{Dwi81}). By~\cite[Cor.~2.3]{BHJ20}, the free complete and completely distributive lattice over a set $X$ is the canonical extension of the free bounded distributive lattice over $X$. We show that the same is true in $\caba$.
For this we need to recall the definition of a canonical extension of a boolean algebra.

\begin{defi}~\cite{JT51,GH01} 
A {\em canonical extension} of a boolean algebra $A$ is a complete boolean algebra $A^\sigma$ together with a boolean embedding $e:A\to A^\sigma$ satisfying:  
\begin{enumerate}
\item (Density) Each $x\in A^\sigma$ is a join of meets (and hence also a meet of joins) of $e[A]$. 
\item (Compactness) For $S,T\subseteq A$, from $\bigwedge e[S] \le \bigvee e[T]$ it follows that $\bigwedge S_0 \le \bigvee T_0$ for some finite $S_0\subseteq S$ and $T_0\subseteq T$. 
\end{enumerate}
\end{defi}

It is well known that canonical extensions are unique up to isomorphism, and that the correspondence $A\mapsto A^\sigma$ extends to a covariant functor $(-)^\sigma:\ba\to\caba$. It can conveniently be described as the composition $\wp\circ\U\circ\uf$, where $\U:\Stone\to\Set$ is the forgetful functor. 
\[
\begin{tikzcd}
\ba \arrow[rrr, bend left = 20, "(-)^\sigma"] \arrow[r, "\uf"'] & \Stone \arrow[r, "\U"'] & \Set \arrow[r, "\wp"'] & \caba
\end{tikzcd}
\]
Thus, we can think of $A^\sigma$ as $\wp(\uf(A))$ and of $e:A\to A^\sigma$ as the Stone map $\beta_A:A\to\wp(\uf(A))$. 

\begin{thm} \label{prop: free caba}
Let $X$ be a set. The canonical extension of the free boolean algebra over $X$ is the free object in $\caba$ over $X$.
\end{thm}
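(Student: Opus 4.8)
The plan is to verify the universal property of the free $\caba$ directly, using the description $A^\sigma \cong \wp(\uf(A))$ from the discussion preceding the theorem. Let $F$ be the free boolean algebra over the set $X$, with unit $j : X \to F$, and let $e = \beta_F : F \to \wp(\uf(F)) = F^\sigma$ be the Stone embedding. Set $h = e \circ j : X \to F^\sigma$. I want to show that for every $A \in \caba$ and every function $g : X \to A$ there is a unique complete boolean homomorphism $\bar{g} : F^\sigma \to A$ with $\bar{g} \circ h = g$. By~\cite[p.~89]{Mac71} this suffices.

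First I would construct $\bar g$. Since $A$ is in particular a boolean algebra, the universal property of $F$ yields a unique boolean homomorphism $g_0 : F \to A$ with $g_0 \circ j = g$. Now apply Tarski duality: write $A = \wp(Y)$ with $Y = \At(A)$ (up to the natural isomorphism $\vartheta_A$), and dualize. Composing $g_0$ with the inclusion $\ba \hookrightarrow \caba$ is not available, but the point is that $g_0 : F \to \wp(Y)$ transposes, via Stone duality, to a continuous map $\uf(\wp(Y)) \to \uf(F)$, i.e.\ to a map $Y \to \uf(F)$ after identifying $Y$ with the (discrete) dual of $\wp(Y)$; equivalently, $g_0$ is $\varphi^{-1}$ for a unique map $\varphi : Y \to \uf(F)$. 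Define $\bar g : \wp(\uf(F)) \to \wp(Y)$ to be $\varphi^{-1}$. This is a complete boolean homomorphism (inverse images always preserve arbitrary unions and complements), and it restricts along $e = \beta_F$ to $g_0$, hence $\bar g \circ h = \bar g \circ e \circ j = g_0 \circ j = g$.

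For uniqueness, suppose $\psi : F^\sigma \to A$ is any complete boolean homomorphism with $\psi \circ h = g$. Then $\psi \circ e : F \to A$ is a boolean homomorphism extending $g$ along $j$, so by freeness of $F$ we get $\psi \circ e = g_0 = \bar g \circ e$. Thus $\psi$ and $\bar g$ agree on the image $e[F]$. It remains to see that $e[F]$ \emph{completely generates} $F^\sigma$, i.e.\ the only complete subalgebra of $F^\sigma$ containing $e[F]$ is $F^\sigma$ itself; this is exactly the Density axiom of the canonical extension, which says every element of $F^\sigma$ is a join of meets of elements of $e[F]$. Since $\psi$ and $\bar g$ are both complete homomorphisms agreeing on $e[F]$, they agree on all such joins of meets, hence on all of $F^\sigma$, so $\psi = \bar g$.

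The main obstacle is the bookkeeping in the construction step: one must be careful that the dualization of $g_0 : F \to A$ produces a genuine \emph{complete} boolean homomorphism on the nose and that it really does extend $g_0$ along $e$. The cleanest way is to factor $(-)^\sigma$ as $\wp \circ \U \circ \uf$ as in the excerpt, apply $\uf$ to $g_0$ to get a continuous map $\uf(g_0) : \uf(A) \to \uf(F)$, forget the topology, and apply $\wp$; then $\bar g := \wp(\U(\uf(g_0)))$ is manifestly a $\caba$-morphism, and the identity $\beta_F = \wp(\U(\uf(-)))$-applied-compatibly together with naturality of the Stone map gives $\bar g \circ \beta_F = \beta_A^{-1} \circ (\text{something}) = g_0$ after identifying $A$ with $\wp(\At(A))$. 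Verifying this last identification carefully — i.e.\ that under $\vartheta_A : A \xrightarrow{\sim} \wp(\At(A))$ the composite agrees with $g_0$ — is the only place where a short calculation is unavoidable, and everything else (Density giving complete generation, inverse images preserving the complete boolean structure) is routine.
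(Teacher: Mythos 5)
Your proposal is correct and follows essentially the same route as the paper's proof: extend $g$ to a boolean homomorphism $g_0 : F \to A$ by freeness of $F$, dualize $g_0$ to a map $\At(A) \to \uf(F)$ (the paper's $\varphi_+(x)=g_0^{-1}({\uparrow}x)$, i.e.\ the restriction of $\uf(g_0)$ to principal ultrafilters), take inverse images to obtain a $\caba$-morphism $\wp(\uf(F)) \to \wp(\At(A)) \cong A$, and use density of $e[F]$ for uniqueness. The only caution concerns your closing ``cleanest way'' remark: $\wp(\U(\uf(g_0)))$ lands in $\wp(\uf(A))$, which is not $A$ when $A$ is infinite, so one really must restrict $\uf(g_0)$ to the atoms first --- exactly what your main construction (and the paper's $\varphi_+$) does.
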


\begin{proof}
Let $F$ be the free boolean algebra over $X$, $f : X \to F$ the associated map, and $e : F \to F^\sigma$ the boolean embedding into the canonical extension. We show that $(F^\sigma, e\circ f)$ has the universal mapping property in $\caba$. Let $A \in \caba$ and $g : X \to A$ be a function. Since $A$ is a boolean algebra, there is a unique boolean homomorphism $\varphi : F \to A$ with $\varphi \circ f = g$. This induces a map $\uf(\varphi) : \uf(A) \to \uf(F)$  
given by $\uf(\varphi)(y)=\varphi^{-1}(y)$. Define $\varphi_+ : \At(A)\to \uf(F)$ by $\varphi_+(x)=\varphi^{-1}({\uparrow}x)$. If we identify atoms with the principal ultrafilters, we can think of $\varphi_+$ as the restriction of $\uf(\varphi)$ to $\At(A)$.

We identify $F^\sigma$ with $\wp(\uf(F))$. Then $e:F\to F^\sigma$ becomes the Stone map $\beta_F$.
The map $\varphi_+:\At(A)\to \uf(F)$ yields a $\caba$-morphism $\wp(\varphi_+) : F^\sigma \to \wp(\At(A))$. Since $A \in \caba$, the map $\vartheta_A : A \to \wp(\At(A))$ is an isomorphism. We set $\psi = \vartheta_A^{-1} \circ \wp(\varphi_+)$. Clearly $\psi  : F^\sigma \to A$ is a $\caba$-morphism.
We show that $\vartheta_A \circ \varphi = \wp(\varphi_+) \circ e$.
\[
\begin{tikzcd}[column sep = 5pc]
& F \arrow[r, "e"] \arrow[d, "\varphi"] & F^\sigma \arrow[d, "\wp(\varphi_+)"] \arrow[dl, shift right = .3pc, "\psi"] \\
X \arrow[ur, "f"] \arrow[r, "g"'] &A \arrow[r, "\vartheta_A"'] & \wp(\At(A))
\end{tikzcd}
\]
Let $a \in F$. Since $\vartheta_A\varphi(a) = \{ x \in \At(A) \mid x \le \varphi(a) \}$ and $e(a) = \beta_F(a) = \{ y \in \uf(F) \mid a \in y\}$, we have
\begin{align*}
(\wp(\varphi_+)\circ e)(a) &= \varphi_+^{-1}e(a) = \{ x \in \At(A) \mid \varphi_+(x) \in e(a) \} \\
&= \{ x \in \At(A) \mid a \in \varphi_+(x) \} = \{ x \in \At(A) \mid a \in \varphi^{-1}({\uparrow}x) \} \\
&= \{ x \in \At(A) \mid x \le \varphi(a)\} = \vartheta_A \varphi(a).
\end{align*}
This shows that $\vartheta_A \circ \varphi = \wp(\varphi_+) \circ e$, so 
\[
\psi \circ (e \circ f) =  \vartheta_A^{-1} \circ \wp(\varphi_+) \circ e \circ f =  \vartheta_A^{-1} \circ \vartheta_A \circ \varphi \circ f =  \varphi \circ f = g.
\]

It is left to show uniqueness. Suppose that $\mu : F^\sigma \to A$ is a $\caba$-morphism satisfying $\mu \circ (e \circ f) = g$. Then $(\mu \circ e) \circ f = (\psi \circ e) \circ f = \varphi \circ f$. By uniqueness of $\varphi$, we have $ \mu \circ e = \varphi = \psi \circ e$. Therefore, $\mu$ and $\psi$ agree on $e[F]$. Since $e[F]$ is dense in $F^\sigma$ and $\mu, \psi$ are $\caba$-morphisms, we conclude that $\mu = \psi$.
\end{proof}

We next show that the forgetful functor $\U:\caba\to\csl$ has a left adjoint $\L:\csl\to\caba$. 
Let $A\in\caba$. We recall that a boolean congruence $\equiv$ on $A$ is a \emph{complete congruence} if $a_i\equiv b_i$ for each $i\in I$ imply $\bigwedge\{ a_i\mid i\in I\} \equiv \bigwedge\{ b_i \mid i\in I\}$. It is well known that the quotient algebra $A/{\equiv}$ is also an object in $\caba$. As usual, for $a\in A$ we write $[a]$ for the equivalence class of $a$. Then the quotient map $\pi:A\to A/{\equiv}$, given by $a \mapsto [a]$, is a $\caba$-morphism.

\begin{rem} \label{rem:comp cong}
There is a well-known one-to-one correspondence between congruences and ideals of a boolean algebra $A$, which associates to each boolean congruence $\equiv$ on $A$ the equivalence class of $0$. If $A \in \caba$, this correspondence restricts to a one-to-one correspondence between complete congruences and principal ideals. In this case, the equivalence class of $0$ is generated by the element $x=\bigvee\{a\vartriangle b\mid a\equiv b\}$, where $\vartriangle$ denotes symmetric difference in $A$. 
\end{rem}

If $M \in \csl$, let $F(M)$ be the free object in $\caba$ over the underlying set of $M$, and let $f_M : M  \to F(M)$ be the associated map. We let $\equiv$ be the complete congruence on $F(M)$ generated by the relations:
\[
f_M\left(\bigwedge S\right) \equiv \bigwedge\{ f_M(s) \mid s\in S \}, \mbox{ where } S\subseteq M. 
\]
We then set $\L(M)$ to be the quotient algebra $F(M)/{\equiv}$. Since $F(M)\in\caba$ and $\equiv$ is a complete congruence, $\L(M)\in\caba$. For $a\in M$, let $\Box_a = [f_M(a)] \in \L(M)$. Let $\alpha_M : M \to \L(M)$ be the composition of the quotient map $\pi : F(M) \to \L(M)$ and $f_M$. Then $\alpha_M(a) = \Box_a$ for each $a \in M$.
\[
\begin{tikzcd}[column sep = 5pc]
M \arrow[r, "f_M"] \arrow[dr, "\alpha_M"'] & F(M) \arrow[d, "\pi"] \\
& \L(M)
\end{tikzcd}
\]
By the definition of $\equiv$ we see that $\Box_{\bigwedge S} = \bigwedge \{ \Box_s  \mid s \in S\}$ in $\L(M)$ for each $S \subseteq M$. Thus, $\alpha_M$ is a $\csl$-morphism.

\begin{rem} \label{rem: kernel}
In view of Remark~\ref{rem:comp cong}, the equivalence class $[0]\in\L(M)$ is the principal ideal generated by the element
\[
\bigvee \left\{ f_M\left({\bigwedge S}\right) \vartriangle \bigwedge \{ f_M(s) \mid s \in S\} \;\middle|\; S \subseteq M\right\}.
\]
\end{rem}

\begin{thm} \label{prop: H is a reflector Venema def}
The correspondence $M \mapsto \L(M)$ defines a functor $\L : \csl \to \caba$ that is left adjoint to the forgetful functor $\U:\caba \to \csl$.
\end{thm}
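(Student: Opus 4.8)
The plan is to verify that, for each $M\in\csl$, the $\csl$-morphism $\alpha_M:M\to\U(\L(M))$ is a universal arrow from $M$ to $\U$; by~\cite[p.~89]{Mac71} (exactly as in the proof of Theorem~\ref{prop: H is a reflector}) this simultaneously yields the adjunction $\L\dashv\U$ and the functoriality of $\L$. Concretely, I must show that for every $A\in\caba$ and every $\csl$-morphism $\gamma:M\to\U(A)$ there is a unique $\caba$-morphism $\tau:\L(M)\to A$ with $\U(\tau)\circ\alpha_M=\gamma$. The first step is to lift $\gamma$ using that $F(M)$ is free in $\caba$ over the underlying set of $M$: regarding $\gamma$ merely as a function from $M$ to $A$, there is a unique $\caba$-morphism $\bar\gamma:F(M)\to A$ with $\bar\gamma\circ f_M=\gamma$.

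The second step is to show that $\bar\gamma$ identifies the elements related by $\equiv$, so that it factors through the quotient $\pi:F(M)\to\L(M)$. Since $\gamma$ is a complete meet-homomorphism and $\bar\gamma$ preserves arbitrary meets, for each $S\subseteq M$ we compute
\begin{align*}
\bar\gamma\left(f_M\left(\bigwedge S\right)\right) &= \gamma\left(\bigwedge S\right)=\bigwedge\{\gamma(s)\mid s\in S\} \\
&= \bigwedge\{\bar\gamma(f_M(s))\mid s\in S\}=\bar\gamma\left(\bigwedge\{f_M(s)\mid s\in S\}\right).
\end{align*}
Because the kernel of a $\caba$-morphism is a complete congruence, and $\equiv$ is by definition the least complete congruence containing the relations $f_M(\bigwedge S)\equiv\bigwedge\{f_M(s)\mid s\in S\}$, it follows that $a\equiv b$ implies $\bar\gamma(a)=\bar\gamma(b)$; equivalently, using Remark~\ref{rem: kernel}, $\bar\gamma$ sends the generator of the principal ideal $[0]\in\L(M)$ to $0$. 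Hence there is a unique $\caba$-morphism $\tau:\L(M)\to A$ with $\tau\circ\pi=\bar\gamma$ (that $\tau$ is complete, and not merely a boolean homomorphism, follows since both $\pi$ and $\bar\gamma$ preserve arbitrary meets). Then $\tau\circ\alpha_M=\tau\circ\pi\circ f_M=\bar\gamma\circ f_M=\gamma$.

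For uniqueness, if $\tau':\L(M)\to A$ is another $\caba$-morphism with $\tau'\circ\alpha_M=\gamma$, then $(\tau'\circ\pi)\circ f_M=\gamma=\bar\gamma\circ f_M$, so the uniqueness clause of the universal property of $F(M)$ forces $\tau'\circ\pi=\bar\gamma=\tau\circ\pi$; since $\pi$ is surjective, hence an epimorphism in $\caba$, we conclude $\tau'=\tau$. This establishes the universal arrows, so $\L$ is a functor left adjoint to $\U$; on a $\csl$-morphism $\sigma:M\to N$, the morphism $\L(\sigma)$ is the unique $\caba$-morphism with $\L(\sigma)\circ\alpha_M=\alpha_N\circ\sigma$, obtained by applying the universal property to $\alpha_N\circ\sigma:M\to\U(\L(N))$. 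I expect the only point needing care to be the factorization step: one must check that the kernel of $\bar\gamma$ is a \emph{complete} congruence containing the defining relations of $\equiv$, so that $\tau$ is well defined \emph{as a $\caba$-morphism}; this rests entirely on the fact that morphisms in $\caba$ preserve all meets. Everything else is the routine diagram chase that also underlies the algebraic construction of $\L:\SL\to\ba$ in~\cite[Prop.~3.12]{KKV04}.
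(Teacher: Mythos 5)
Your proposal is correct and follows essentially the same route as the paper's proof: lift $\gamma$ through the free object $F(M)$, check that the defining relations of $\equiv$ are collapsed because $\gamma$ preserves arbitrary meets, and factor through the quotient; your uniqueness argument (surjectivity of $\pi$ plus uniqueness of the free extension) is a trivial variant of the paper's appeal to $\L(M)$ being generated by $\alpha_M[M]$. Your explicit remark that the kernel of a $\caba$-morphism is a \emph{complete} congruence, so that the least complete congruence containing the relations sits inside it, is a point the paper leaves implicit, and it is worth making.
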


\begin{proof}
By~\cite[p.~89]{Mac71} it is enough to show that for each $M \in \csl$, $A\in\caba$, and $\csl$-morphism $\gamma : M \to A$ there is a unique $\caba$-morphism $\tau : \L(M) \to A$ such that $\tau\circ\alpha_M = \gamma$. There is a unique $\caba$-morphism $\varphi : F(M) \to A$ with $\varphi \circ f_M = \gamma$. To see that $\varphi$ factors through $\equiv$, let $S \subseteq M$. Since $\gamma$ is a $\csl$-morphism, $\gamma(\bigwedge S) = \bigwedge \{ \gamma(s) \mid s \in S\}$. Therefore,
\[
\varphi f_M\left(\bigwedge S \right) = \gamma \left(\bigwedge S  \right) = \bigwedge \gamma[S]
\]
and
\[
\varphi\left(\bigwedge \{ f_M(s) \mid s \in S\}\right) = \bigwedge \{ \varphi f_M(s) \mid s \in S\} = \bigwedge \{ \gamma(s) \mid s \in S\} = \bigwedge \gamma[S].
\]
\[
\begin{tikzcd}[column sep = 5pc]
M \arrow[r, "f_M"'] \arrow[dr, "\gamma"'] \arrow[rr, bend left = 20, "\alpha_M"] & F(M) \arrow[r, "\pi"'] \arrow[d, "\varphi"] & \L(M) \arrow[dl, "\tau"] \\
& A&
\end{tikzcd} 
\]
Thus, $\varphi f_M\left(\bigwedge S \right) = \varphi\left(\bigwedge \{ f_M(s) \mid s \in S\}\right)$.
This implies that $\equiv$ is contained in $\ker(\varphi)$, and hence $\varphi$ induces a $\caba$-morphism $\tau : \L(M) \to A$ with $\tau \circ \alpha_M = \gamma$. Since $\L(M)$ is generated by $\alpha_M[M]$ and $\tau$ is a $\caba$-morphism, $\tau$ is uniquely determined by the equation $\tau \circ \alpha_M = \gamma$.
\end{proof}

\begin{rem} \label{rem: H on maps}
To describe how $\L$ acts on morphisms, let $\gamma : M \to N$ be a $\csl$-morphism. Then $\alpha_N \circ \gamma : M \to \L(N)$ is a $\csl$-morphism, so there is a unique $\caba$-morphism $\L(\gamma) : \L(M) \to \L(N)$ such that $\L(\gamma) \circ \alpha_M = \alpha_N \circ \gamma$. 
\[
\begin{tikzcd}[column sep = 5pc]
M \arrow[r, "\gamma"] \arrow[d, "\alpha_M"'] & N \arrow[d, "\alpha_N"] \\
\L(M) \arrow[r, "\L(\gamma)"'] & \L(N)
\end{tikzcd}
\]
Therefore, if $a \in M$, then $\L(\gamma)(\Box_a) = \L(\gamma)\alpha_M(a) = \alpha_N\gamma(a) = \Box_{\gamma(a)}$.
\end{rem}

We conclude this section by giving an alternative construction of the left adjoint of the forgetful functor $\caba\to\csl$. 
In parallel to Theorem~\ref{prop: H is a reflector}, we can replace $\SL$ by $\csl$ and Stone duality by Tarski duality. Then for $M \in \csl$ we can consider $\L(M)$ to be $\wp(\hom_{\csl}(M, 2))$. If $\sigma \in \hom_{\csl}(M, 2)$, then $\sigma^{-1}(1)$ is a filter of $M$. Since $\sigma$ preserves arbitrary meets, letting $a = \bigwedge \sigma^{-1}(1)$ 
yields $\sigma^{-1}(1) = {\uparrow}a$. Conversely, if $a \in M$, then defining $\sigma_a$ by 
\[
\sigma_a(m) = \left\{\begin{array}{ll}1 & \textrm{if }a \le m\\ 0 & \textrm{otherwise}\end{array}\right.
\]
yields $\sigma_a \in \hom_{\csl}(M, 2)$. 
Because this correspondence reverses the order, there is an order-reversing bijection $f  : M \to \hom_{\csl}(M, 2)$ sending $a$ to $\sigma_a$. 
Thus, $\wp(f) : \wp(\hom_{\csl}(M, 2)) \to~\wp(M)$ 
is a $\caba$-isomorphism. If $i_M : M \to \wp(\hom_{\csl}(M,2))$ is given by $i_M(m) = \{ \sigma \in \hom_{\csl}(M,2) \mid \sigma(m) = 1\}$, then we have the function $\iota_M = \wp(f) \circ i_M : M \to \wp(M)$ given by
\begin{align*}
\iota_M(m) &= \wp(f)(i_M(m)) = \{ a \in M \mid f(a) \in i_M(m) \} = \{ a \in M \mid f(a)(m) = 1 \} \\
&= \{ a \in M \mid a \le m\} = {\downarrow}m .
\end{align*}
Thus, we may set $\L(M) = \wp(M)$ and define $\iota_M : M \to \L(M)$ by $\iota(m) = {\downarrow}m$.

\begin{thm}\label{thm:alternate L}
Associating with each $M \in \csl$ its powerset yields an alternative description of the functor $\mathcal{L}:\csl \to \caba$ that is left adjoint to the forgetful functor $\U:\caba \to \csl$.
\end{thm}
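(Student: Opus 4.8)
The plan is to follow the proof of Theorem~\ref{prop: H is a reflector} essentially verbatim, replacing Stone duality by Tarski duality throughout. The candidate left adjoint and its unit have already been identified in the discussion preceding the statement: $\L(M)=\wp(M)$, with $\iota_M:M\to\wp(M)$ given by $\iota_M(m)={\downarrow}m$. First I would check that $\iota_M$ is a $\csl$-morphism: since $a\le\bigwedge S$ iff $a\le s$ for every $s\in S$, we get $\iota_M(\bigwedge S)={\downarrow}(\bigwedge S)=\bigcap_{s\in S}{\downarrow}s=\bigwedge_{s\in S}\iota_M(s)$ (meet in $\wp(M)$ being intersection), and the instance $S=\varnothing$ records preservation of the top. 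By~\cite[p.~89]{Mac71} it then suffices to establish the universal mapping property: for every $A\in\caba$ and every $\csl$-morphism $\gamma:M\to A$ there is a unique $\caba$-morphism $\tau:\wp(M)\to A$ with $\tau\circ\iota_M=\gamma$.

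To build $\tau$, I would dualize $\gamma$ exactly as in Theorems~\ref{prop: H is a reflector} and~\ref{prop: free caba}. The map $\gamma^*:\hom_{\csl}(A,2)\to\hom_{\csl}(M,2)$, $\gamma^*(\sigma)=\sigma\circ\gamma$, restricts to the set $\hom_{\caba}(A,2)$ of complete boolean homomorphisms $A\to 2$ (a complete boolean homomorphism is in particular a complete meet-homomorphism), which we identify with $\At(A)$. Unwinding the order-reversing identification $\hom_{\csl}(M,2)\cong M$, this restriction becomes the function $\gamma_+:\At(A)\to M$ characterized by $\gamma_+(x)\le m \iff x\le\gamma(m)$, explicitly $\gamma_+(x)=\bigwedge\{m\in M\mid x\le\gamma(m)\}$ (equivalently, $\gamma_+$ is the restriction to atoms of the left adjoint of the meet-preserving map $\gamma$). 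Then $\wp(\gamma_+):\wp(M)\to\wp(\At(A))$ is a $\caba$-morphism, and since $A\in\caba$ the Tarski unit $\vartheta_A:A\to\wp(\At(A))$ is an isomorphism; I set $\tau=\vartheta_A^{-1}\circ\wp(\gamma_+)$.

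The identity $\tau\circ\iota_M=\gamma$ is then a one-line computation parallel to the one in the proof of Theorem~\ref{prop: free caba}: for $m\in M$,
\[
\wp(\gamma_+)(\iota_M(m))=\gamma_+^{-1}({\downarrow}m)=\{x\in\At(A)\mid\gamma_+(x)\le m\}=\{x\in\At(A)\mid x\le\gamma(m)\}=\vartheta_A(\gamma(m)),
\]
so $\tau(\iota_M(m))=\vartheta_A^{-1}\vartheta_A(\gamma(m))=\gamma(m)$. For uniqueness I need that $\iota_M[M]$ generates $\wp(M)$ as a complete boolean algebra. This is the one point that genuinely differs from the Stone setting of Theorem~\ref{prop: H is a reflector}, where the analogous statement could simply be asserted; here it follows from the identity
\[
\{m\}={\downarrow}m\cap\bigcap\{\,M\setminus{\downarrow}m'\mid m'\in M,\ m'\not\ge m\,\},
\]
which exhibits every singleton — and hence, taking arbitrary unions, every subset of $M$ — as a complete boolean combination of principal downsets. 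Granting this, every $\caba$-morphism out of $\wp(M)$ is determined by its values on $\iota_M[M]$, so $\tau$ is unique. I expect the main work to be bookkeeping rather than anything conceptual: making the dualization of $\gamma$ precise (well-definedness of the restriction of $\gamma^*$ and the two order-reversing identifications) and verifying the generation identity above; once these are in place the argument is a direct transcription of Theorem~\ref{prop: H is a reflector} with $\wp$, $\At$, $\vartheta$ in place of $\Clop$, $\uf$, $\beta$.
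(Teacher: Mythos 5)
Your proposal is correct and follows essentially the same route as the paper: the unit $\iota_M(m)={\downarrow}m$, the morphism $\tau=\vartheta_A^{-1}\circ\wp(\gamma_+)$ with $\gamma_+$ the restriction to atoms of the left adjoint of $\gamma$, and uniqueness via generation of $\wp(M)$ by singletons all match the paper's argument. The only cosmetic difference is your generation identity $\{m\}={\downarrow}m\cap\bigcap\{M\setminus{\downarrow}m'\mid m'\not\ge m\}$, which is a correct variant of the paper's $\{a\}={\downarrow}a\setminus\bigcup\{{\downarrow}b\mid b<a\}$.
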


\begin{proof}
Let $M\in\csl$. Clearly the powerset of $M$ is an object in $\caba$. Define $\iota_M:M\to\L(M)$ by $\iota_M(a)=\down a$ for each $a \in M$. For each $S \subseteq M$, we have 
\[
  \bigwedge \{ \iota_M(s) \mid s \in S \} = \bigcap \{ \down s \mid s \in S \} = \big\downarrow\! \left(\bigwedge S\right) = \iota_M \left( \bigwedge S \right).
\]
Therefore, $\iota_M$ is a $\csl$-morphism. It is enough to show that for each 
$A\in\caba$ and $\csl$-morphism $\gamma : M \to A$ there is a unique $\caba$-morphism $\tau : \L(M) \to A$ such that $\tau\circ\iota_M = \gamma$. 
\[
\begin{tikzcd}[column sep = 5pc]
M \arrow[r, "\iota_M"] \arrow[dr, "\gamma"'] & \L(M) \arrow[d, "\tau"] \\
& A
\end{tikzcd}
\]

Let $\gamma^*$ be the left adjoint of $\gamma$, and consider its restriction $\gamma^*:\At(A) \to M$. Also recall that $\vartheta_A:A\to\wp(\At(A))$ is a $\caba$-isomorphism, hence so is $\vartheta_A^{-1}:\wp(\At(A))\to A$ which is given by $\vartheta_A^{-1} (S)= \bigvee S$ for $S \subseteq \At(A)$.
We  
set $\tau=\vartheta_A^{-1} \circ \wp(\gamma^*): \L(M) \to A$. 
\[
\begin{tikzcd}
\L(M)=\wp(M) \arrow[rr, bend left = 20, "\tau"] \arrow[r, "\wp(\gamma^*)"'] & \wp(\At(A)) \arrow[r, "\vartheta_A^{-1}"'] & A
\end{tikzcd}
\]
Then $\tau$ is the composition of two $\caba$-morphisms, so is a $\caba$-morphism. Moreover, for $S\subseteq M$, we have
\[
\tau(S)= \vartheta_A^{-1}\wp(\gamma^*)(S) = \vartheta_A^{-1}(\{ x \in \At(A) \mid \gamma^*(x) \in S \}) = \bigvee \{ x \in \At(A) \mid \gamma^*(x) \in S \}.
\]
Thus, for $a \in M$, we have
\[
\tau(\iota_M(a))=\tau(\down a) = \bigvee \{ x \in \At(A) \mid \gamma^*(x) \le a \} = \bigvee \{ x \in \At(A) \mid x \le \gamma(a) \} = \gamma(a) 
\]
since $A$ is atomic.
To show that $\tau$ is uniquely determined by the equation $\tau \circ \iota_M=\gamma$, it is enough to show that $\L(M)$ is generated as a complete boolean algebra by $\iota_M[M]$. Since each $S \subseteq M$ is the union of singletons, this follows from the equation 
\[
\{ a \} = \down a \setminus \{ b \mid b < a \} = \down a \setminus \bigcup \{ \down b \mid b < a \} = \iota_M(a) \wedge \neg \bigvee \{ \iota_M(b) \mid b < a \}.
\qedhere 
\]
\end{proof}

\begin{rem}\label{rem:H on morphisms}
It is worth mentioning that the equation $\{ a \} = \iota_M(a) \wedge \neg \bigvee \{ \iota_M(b) \mid b < a \}$ above
allows an alternate description of $\tau:\L(M)\to A$ that does not involve atoms. 
Indeed, since $\tau$ is a $\caba$-morphism, we have
\begin{align*}
\tau(\{ a \}) &= \tau \left(\iota_M(a) \wedge \neg \bigvee \{ \iota_M(b) \mid b < a \} \right) = \tau \iota_M(a) \wedge \neg \bigvee \{ \tau \iota_M(b) \mid b < a \}\\
&= \gamma(a) \wedge \neg \bigvee \{ \gamma(b) \mid b < a \}.
\end{align*}
Thus, for each $S \subseteq M$, we have
\[
  \tau(S) = \bigvee \{ \tau(\{ a \}) \mid a \in S \} = \bigvee \left\lbrace \gamma(a) \wedge \neg \bigvee \{\gamma(b) \mid b < a \} \;\middle|\; a \in S \right\rbrace.
\]
\end{rem}

\begin{rem} \label{rem:morphisms}
To describe how $\L$ acts on morphisms, let $\gamma : M \to N$ be a $\csl$-morphism. Then $\iota_N \circ \gamma : M \to \L(N)$ is a $\csl$-morphism, so there is a unique $\caba$-morphism $\L(\gamma) : \L(M) \to \L(N)$ such that $\L(\gamma) \circ \iota_M = \iota_N \circ \gamma$.
\[
\begin{tikzcd}[column sep = 5pc]
M \arrow[r, "\gamma"] \arrow[d, "\iota_M"'] & N \arrow[d, "\iota_N"] \\
\L(M) \arrow[r, "\L(\gamma)"'] & \L(N)
\end{tikzcd}
\]
Therefore, if $a \in M$, then $\L(\gamma)(\down a) = \L(\gamma)\iota_M(a) = \iota_N\gamma(a) = \down \gamma(a)$.
\end{rem}

\section{Coalgebraic approach to Thomason duality}\label{sec:thomason}

\begin{defi}
Let $\H:\caba\to\caba$ be the composition $\H=\L\U$.
\[
\begin{tikzcd}[column sep = 5pc]
\caba \arrow[rr, bend left = 15, "\H"] \arrow[r, "\U"'] & \csl \arrow[r, "\L"'] & \caba
\end{tikzcd}
\]
\end{defi}

\begin{rem}
In Theorems~\ref{prop: H is a reflector Venema def} and~\ref{thm:alternate L} we have given two alternative constructions of $\L:\csl\to\caba$. Thus, we have two alternative descriptions of $\H:\caba\to\caba$. For $A \in \caba$ we can think of $\H(A)$ as the powerset of $A$ (Theorem~\ref{thm:alternate L}) or as the quotient of the free object in $\caba$ over $A$ (Theorem~\ref{prop: H is a reflector Venema def}). The resulting two functors are naturally isomorphic. In this section we will always assume that $\H(A)$ is the powerset of $A$, but will indicate how the corresponding result can be proved if we think of $\H(A)$ as the quotient of the free object in $\caba$ over $A$. 
\end{rem}

We show that the diagram in Figure~\ref{diagram} is commutative up to natural isomorphism. The horizontal arrows in the diagram represent the contravariant functors $\At:\Set \to^\op \caba$ and $\wp:\Set\to^\op\caba$ of Tarski duality, whereas the vertical arrows represent the endofunctors $\H$ and $\mathcal{P}$ on $\caba$ and $\Set$, respectively. This together with standard algebra/coalgebra machinery then allows us to prove that $\Alg(\H)$ is dually equivalent to $\Coalg(\mathcal{P})$, thus yielding an alternate proof of Thomason duality.
\begin{figure}[H]
\begin{tikzcd}[column sep = 5pc] 
\caba \arrow[r, shift left = .5ex, "\At"] \arrow[d, "\H"'] &  \Set \arrow[d, "\mathcal{P}"] \arrow[l, shift left, "\wp"] \\
\caba  \arrow[r, shift left = .5ex, "\At"] & \Set \arrow[l, shift left, "\wp"]
\end{tikzcd}
\caption{}\label{diagram}
\end{figure}

\begin{thm}\label{thm:Hwp=wpP}
  \hspace{1em}
\begin{enumerate}[$(1)$]
\item $\H \circ \wp = \wp\circ\mathcal{P}$.
\item $\At\circ\H$ is naturally isomorphic to $\mathcal{P}\circ\At$.
\end{enumerate}
\end{thm}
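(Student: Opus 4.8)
The plan is to verify both statements by a direct computation, using the concrete descriptions of the functors involved: $\wp:\Set\to^\op\caba$ sends $X$ to $\wp(X)$ and $f:X\to Y$ to $f^{-1}$; $\mathcal P:\Set\to\Set$ sends $X$ to $\wp(X)$ and $f$ to the direct-image map $f[-]$; $\At$ sends $A\in\caba$ to its set of atoms and a complete boolean homomorphism $\alpha$ to the restriction of its left adjoint $\alpha^*$ to atoms; and $\H=\L\U$ with $\L(M)=\wp(M)$ and $\L(\gamma)$ determined by $\L(\gamma)(\down a)=\down\gamma(a)$ (Theorem~\ref{thm:alternate L} and Remark~\ref{rem:morphisms}).

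For $(1)$: On objects, $\H(\wp(X))=\L(\U(\wp(X)))=\wp(\wp(X))$, since the underlying complete meet-semilattice of $\wp(X)$ has underlying set $\wp(X)$; and $(\wp\circ\mathcal P)(X)=\wp(\mathcal P(X))=\wp(\wp(X))$, so the two agree on objects. The point is to check they agree on morphisms. First I would recall that for $f:X\to Y$ the map $\wp(f)=f^{-1}:\wp(Y)\to\wp(X)$ is a complete boolean homomorphism whose left adjoint is the direct-image map $f[-]:\wp(X)\to\wp(Y)$, so $\U(\wp(f))$ is the complete meet-homomorphism $f^{-1}$ viewed in $\csl$. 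Hmm --- but wait: $\L$ is left adjoint to $\U:\caba\to\csl$, and a $\csl$-morphism $\gamma:M\to N$ is a complete \emph{meet}-homomorphism, so I must check that $\U(\wp(f))=f^{-1}$ is indeed such a map (it is: $f^{-1}$ preserves arbitrary intersections). Then by Remark~\ref{rem:morphisms}, $\L(f^{-1}):\wp(\wp(Y))\to\wp(\wp(X))$ is the unique $\caba$-morphism with $\L(f^{-1})(\down B)=\down(f^{-1}(B))$ for $B\subseteq Y$. On the other side, $(\wp\circ\mathcal P)(f)=\wp(\mathcal P(f))=(\mathcal P(f))^{-1}=(f[-])^{-1}:\wp(\wp(Y))\to\wp(\wp(X))$, which sends $\mathcal S\subseteq\wp(Y)$ to $\{A\subseteq X\mid f[A]\in\mathcal S\}$. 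To see these coincide it suffices, since both are $\caba$-morphisms and principal downsets $\down B$ together with complementation and joins generate $\wp(\wp(Y))$ (indeed singletons $\{B\}=\down B\setminus\bigcup\{\down B'\mid B'\subsetneq B\}$ generate it, as in the proof of Theorem~\ref{thm:alternate L}), to check agreement on the $\down B$. And $(f[-])^{-1}(\down B)=\{A\subseteq X\mid f[A]\subseteq B\}=\{A\subseteq X\mid A\subseteq f^{-1}(B)\}=\down(f^{-1}(B))$, which matches. Thus $\H\circ\wp$ and $\wp\circ\mathcal P$ agree on objects and morphisms, giving strict equality. (If instead one uses the description of $\H(A)$ as a quotient of the free $\caba$ over $A$ from Theorem~\ref{prop: H is a reflector Venema def}, one gets the same conclusion up to the natural isomorphism between the two constructions of $\L$.)

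For $(2)$: this follows formally from $(1)$ together with Tarski duality, but the cleanest route is to exhibit the natural isomorphism directly. By Tarski duality the unit $\vartheta:1_{\caba}\to\wp\circ\At$ is a natural isomorphism; applying $\At$ to part $(1)$ and composing with $\vartheta$ appropriately should produce a natural isomorphism $\At\circ\H\cong\mathcal P\circ\At$. More concretely: for $A\in\caba$ the atoms of $\H(A)=\wp(A)$ are exactly the singletons $\{a\}$ for $a\in A$, so $\At(\H(A))=\{\{a\}\mid a\in A\}$, which is canonically in bijection with $A$ via $\{a\}\mapsto a$; and $(\mathcal P\circ\At)(A)=\wp(\At(A))$. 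So rather than these being equal, I should build the iso using $\vartheta_A:A\xrightarrow{\sim}\wp(\At(A))$: define $\Theta_A:\At(\H(A))\to\wp(\At(A))$ by $\{a\}\mapsto\vartheta_A(a)=\down a\cap\At(A)$. I would then check naturality: given a complete boolean homomorphism $\alpha:A\to B$ with left adjoint $\alpha^*$, the map $\At(\H(\alpha))$ is the restriction to atoms of the left adjoint of $\H(\alpha)=\L(\U(\alpha))$, and $(\mathcal P\circ\At)(\alpha)=\mathcal P(\alpha^*)=\alpha^*[-]:\wp(\At(B))\to\wp(\At(A))$; the naturality square then reduces, via Remark~\ref{rem:H on morphisms} or a direct adjointness computation identifying the left adjoint of $\L(\U\alpha)$, to the already-known naturality of $\vartheta$.

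\textbf{Main obstacle.} The computational content is routine; the one genuine subtlety is identifying, for a complete boolean homomorphism $\alpha:A\to B$, the left adjoint of $\H(\alpha)=\L(\U\alpha):\wp(A)\to\wp(B)$ and verifying its action on atoms is (up to the identifications above) the direct-image map $\alpha^*[-]$. This is needed to make the naturality square in $(2)$ commute, and it requires carefully unwinding how $\L$ acts on morphisms (Remark~\ref{rem:morphisms}) together with the fact that $\U\alpha=\alpha$ has left adjoint $\alpha^*$ in $\csl$; everything else is bookkeeping with powersets, direct and inverse images, and the generation of $\caba$-algebras by principal downsets.
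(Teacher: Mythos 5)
Your proposal is correct and follows essentially the same route as the paper: for (1) both sides are identified as the double powerset on objects, and equality of morphisms is checked on the generating principal downsets $\down B$ via the computation $(f[-])^{-1}(\down B)=\down f^{-1}(B)$ together with Remark~\ref{rem:morphisms}; for (2) the paper likewise deduces the natural isomorphism formally from (1) and the dual equivalence, and your explicit map $\{a\}\mapsto\{x\in\At(A)\mid x\le a\}$ is exactly the $\zeta_A$ the paper records separately in Remark~\ref{cor:AtH and PAt naturally iso}.
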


\begin{proof}
(1) If $X \in \Set$, then $\wp \mathcal{P}(X)$ and $\H \wp (X)$ are both objects in $\caba$ obtained by taking the double powerset of $X$ ordered by inclusion. We show that the two compositions also agree on morphisms. 
Let $f:X \to Y$ be a map. It is sufficient to show that $\wp \mathcal{P}(f)(\down S)=\H \wp(f)(\down S)$ for each $S \subseteq Y$. By Remark~\ref{rem:morphisms}, we have 
\[
\H \wp(f)(\down S) = \down \wp(f)(S)=\down f^{-1}(S).
\]
On the other hand, 
\begin{align*}
\wp \mathcal{P}(f)(\down S) &= \mathcal{P}(f)^{-1}(\down S)=\{ T \in \wp(X) \mid \mathcal{P}(f)(T) \in \down S \}\\
&=\{ T \in \wp(X) \mid f[T] \subseteq S \}=\{ T \in \wp(X) \mid T \subseteq f^{-1}(S) \}=\down f^{-1}(S).
\end{align*}
Thus, $\wp \mathcal{P}(f)(\down S)=\H \wp(f)(\down S)$, completing the proof.

(2) follows from (1) since the horizontal arrows in the diagram in Figure~\ref{diagram} form a dual equivalence.
\end{proof}

\begin{rem}\label{rem:box vs down}
If we use the alternative description of $\H$, then Theorem~\ref{thm:Hwp=wpP}(1) should be phrased as $\H \circ \wp$ is naturally isomorphic to $\wp\circ\mathcal{P}$. The natural isomorphism $\xi:\H \circ \wp\to\wp\circ\mathcal{P}$ is given on the generators of $\H\wp(X)$ by $\xi_X(\Box_S)=\down S$ for each $S\subseteq X\in\Set$.
\end{rem}

In the next remark we give an explicit description of the natural isomorphism $\zeta : \At\circ\H \to \mathcal{P}\circ\At$. This will be used in Remark~\ref{rem: 4.11}.

\begin{rem}\label{cor:AtH and PAt naturally iso}
For $A \in \caba$ define $\zeta_A : \At\H(A) \to \mathcal{P}\At(A)$ by 
\[
\zeta_A(\{ a \})= \{ x \in \At(A) \mid x \le a \}
\] 
for each $a \in A$. 
Since $\H \circ \wp = \wp\circ\mathcal{P}$ and $\varepsilon,\vartheta$ are natural isomorphisms of Tarski duality, we have that the composition $\At \H(\vartheta_A) \circ \varepsilon_{\mathcal{P} \At(A)}$ is a bijection. 
\[
\begin{tikzcd}[column sep = 5pc]
\mathcal{P} \At(A) \arrow[r, "\varepsilon_{\mathcal{P} \At(A)}"] & \At \wp \mathcal{P} \At(A) = \At \H \wp \At(A) \arrow[r, "\At \H(\vartheta_A)"] & \At \H(A)
\end{tikzcd}
\]
We show that for each $a \in A$ we have 
\[
(\At \H(\vartheta_A) \circ \varepsilon_{\mathcal{P} \At(A)})(\{ x \in \At(A) \mid x \le a \})=\{a \}.
\]
Since $\varepsilon_{\mathcal{P} \At(A)}(\{ x \in \At(A) \mid x \le a \})=\{ \{ x \in \At(A) \mid x \le a \} \}$, it is sufficient to prove that
\[
\At \H(\vartheta_A)(\{ \{ x \in \At(A) \mid x \le a \} \})=\{a \}.
\]
It follows from Remark~\ref{rem:H on morphisms} that
\begin{align*}
\H(\vartheta_A)(\{a \}) &=\down \vartheta_A(a) \wedge \neg \bigvee \{ \down \vartheta_A(b) \mid b < a \} \\
&=\down \{ x \in \At(A) \mid x \le a \} \setminus \bigcup \{ \down \{ x \in \At(A) \mid x \le b \} \mid b < a \} \\
&=\{ \{ x \in \At(A) \mid x \le a \} \}.
\end{align*}
In particular, $\{ \{ x \in \At(A) \mid x \le a \} \} \le \H(\vartheta_A)(\{ a \})$, and so 
\[
\At \H(\vartheta_A) (\{ \{ x \in \At(A) \mid x \le a \} \}) \le \{ a \}
\]
 because $\At\H(\vartheta_A)$ is left adjoint to $\H(\vartheta_A)$. Therefore, $\At \H(\vartheta_A)(\{ \{ x \in \At(A) \mid x \le a \} \})=\{a \}$ since both sides of the last inequality are atoms. Thus, $\zeta_A =(\At \H(\vartheta_A) \circ \varepsilon_{\mathcal{P} \At(A)})^{-1}$, and hence $\zeta$ is a natural isomorphism.
\end{rem}

We next utilize Theorem~\ref{thm:Hwp=wpP} and standard algebra/coalgebra machinery to show that Tarski duality lifts to a dual equivalence between $\Alg(\H)$ and $\Coalg(\mathcal P)$. 
We start with the following well-known result (see, e.g.,~\cite[Sec.~9]{Ven07}). Since we will be using the functors establishing the isomorphism of Theorem~\ref{thm: KFr} in Remark~\ref{rem: canonical ext on algebras}, we sketch the proof.

\begin{thm}\label{thm: KFr}
$\KFr$ is isomorphic to $\Coalg(\mathcal{P})$.
\end{thm}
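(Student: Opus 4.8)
The plan is to exhibit an explicit isomorphism of categories $\KFr \cong \Coalg(\mathcal{P})$ by sending a Kripke frame to the coalgebra determined by its relation, and conversely. First I would fix the object-level correspondence: given a Kripke frame $(X,R)$, associate the $\Set$-coalgebra $(X,\rho_R)$ where $\rho_R : X \to \mathcal{P}(X)$ is defined by $\rho_R(x) = R[x]$; conversely, given a coalgebra $(X,f)$ with $f : X \to \mathcal{P}(X)$, define a relation $R_f$ on $X$ by $x R_f y \iff y \in f(x)$. These two assignments are manifestly mutually inverse on objects, since $R[x] = \{y \mid xRy\}$ recovers $R$ from $\rho_R$ and $f$ from $R_f$ in the obvious way.

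Next I would check that morphisms correspond. The claim is that a map $\alpha : X_1 \to X_2$ is a p-morphism between $(X_1,R_1)$ and $(X_2,R_2)$ if and only if it is a coalgebra morphism between $(X_1,\rho_{R_1})$ and $(X_2,\rho_{R_2})$, i.e.\ the square with sides $\alpha$, $\rho_{R_1}$, $\mathcal{P}(\alpha)$, $\rho_{R_2}$ commutes. Unpacking, commutativity says $\mathcal{P}(\alpha)(\rho_{R_1}(x)) = \rho_{R_2}(\alpha(x))$ for all $x \in X_1$, i.e.\ $\alpha[R_1[x]] = R_2[\alpha(x)]$, which is exactly the defining condition of a p-morphism. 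So the two morphism classes literally coincide, and identities and composition are obviously preserved (the underlying functions are unchanged). This gives a functor $\KFr \to \Coalg(\mathcal{P})$ and a functor back, and the two object-level bijections together with the morphism identification show these functors are mutually inverse, hence an isomorphism of categories.

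I do not expect any real obstacle here — this is a routine unpacking of definitions, which is why the statement is labeled well known and only a sketch is promised. The one point that deserves a word of care, and the closest thing to a subtlety, is making the bookkeeping clean enough that the functors constructed in this proof can be reused in Remark~\ref{rem: canonical ext on algebras}; so I would be explicit about naming the functor $\KFr \to \Coalg(\mathcal{P})$ (say it sends $(X,R) \mapsto (X,\rho_R)$ and acts as the identity on underlying maps) and its inverse, rather than merely asserting the equivalence. Beyond that, it is just a matter of writing down the chain of equivalences $\mathcal{P}(\alpha)\circ\rho_{R_1} = \rho_{R_2}\circ\alpha \iff \alpha[R_1[x]]=R_2[\alpha(x)] \text{ for all } x \iff \alpha \text{ is a p-morphism}$ and observing functoriality is immediate because nothing changes at the level of sets and functions.
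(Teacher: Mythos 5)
Your proposal is correct and follows essentially the same route as the paper's sketch: the same object-level correspondence $(X,R)\mapsto(X,\rho_R)$ with inverse $(X,f)\mapsto(X,R_f)$, and the same unpacking showing that the coalgebra-morphism square commutes exactly when $\alpha[R_1[x]]=R_2[\alpha(x)]$, i.e.\ when $\alpha$ is a p-morphism. No gaps; your explicit verification of the morphism condition is slightly more detailed than the paper's sketch.
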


\begin{proof}[Proof (Sketch)]
To each Kripke frame $\mathfrak F=(X,R)$ we associate the coalgebra $\rho_R:X \to \mathcal{P}(X)$ defined by $\rho_R(x)=R[x]$. If $f:X_1 \to X_2$ is a p-morphism between Kripke frames $(X_1, R_1)$ and $(X_2, R_2)$, then $f$ is also a morphism between the coalgebras $(X_1,\rho_{R_1})$ and $(X_2, \rho_{R_2})$. This defines a covariant functor $\C : \KFr \to \Coalg(\mathcal{P})$. To each coalgebra $(X,\rho)$ for $\mathcal{P}$, we associate the Kripke frame $(X,R_\rho)$ where $xR_\rho y$ iff $y \in \rho(x)$. If $f$ is a morphism between two coalgebras $(X_1,\rho_1)$ and $(X_2,\rho_2)$, then $f$ is also a p-morphism between the Kripke frames $(X_1,R_{\rho_1})$ and $(X_2, R_{\rho_2})$. This defines a covariant functor $\F : \Coalg(\mathcal{P}) \to \KFr$. It is straightforward to see that $R=R_{\rho_R}$ for each $(X,R) \in \KFr$ and $\rho=\rho_{R_\rho}$ for each $(X,\rho) \in \Coalg(\mathcal{P})$. Thus, the functors $\C$ and $\F$ yield an isomorphism of $\KFr$ and $\Coalg(\mathcal{P})$.
\end{proof}

We next show that $\cama$ is isomorphic to $\Alg(\H)$. This is parallel to the well-known fact that $\ma$ is isomorphic to $\Alg(\K)$ (see, e.g.,~\cite[Cor.~3.11]{KKV04}). 

\begin{thm}\label{thm: cama}
$\cama$ is isomorphic to $\Alg(\H)$.
\end{thm}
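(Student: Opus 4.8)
The plan is to construct an isomorphism of categories between $\cama$ and $\Alg(\H)$ by exploiting the adjunction $\L \dashv \U$ of Theorem~\ref{thm:alternate L}, which (by the standard correspondence between algebras for a composite monad-like functor and structures described by the relevant operations) should match $\H$-algebra structure maps $h : \H(A) \to A$ with completely multiplicative modal operators $\Box$ on $A$. Concretely, given $A \in \caba$, an $\H$-algebra is a $\caba$-morphism $h : \L\U(A) \to A$, equivalently (by the universal property packaged in Theorem~\ref{thm:alternate L}, applied with $M = \U(A)$ and $\gamma = \Box$) a $\csl$-morphism $\Box : \U(A) \to A$, i.e.\ a map $\Box : A \to A$ preserving arbitrary meets. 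Since $\caba$-morphisms between underlying boolean algebras are exactly boolean homomorphisms that also preserve arbitrary meets, this is precisely a completely multiplicative modal operator, so $(A, h)$ corresponds to the object $(A, \Box) \in \cama$.

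First I would make this correspondence precise on objects. Using $\L(M) = \wp(M)$ and $\iota_M : M \to \wp(M)$, $\iota_M(m) = {\downarrow}m$ from Theorem~\ref{thm:alternate L}: given $h : \H(A) \to A$ in $\caba$, define $\Box_h : A \to A$ by $\Box_h(a) = h(\iota_{\U(A)}(a)) = h({\downarrow}a)$. Since $\iota_{\U(A)}$ is a $\csl$-morphism and $h$ is a $\caba$-morphism (hence preserves arbitrary meets), $\Box_h$ preserves arbitrary meets, so $(A, \Box_h) \in \cama$. Conversely, given $(A, \Box) \in \cama$, the map $\Box : \U(A) \to A$ is a $\csl$-morphism, so by the universal property there is a unique $\caba$-morphism $h_\Box : \H(A) \to A$ with $h_\Box \circ \iota_{\U(A)} = \Box$; using the explicit formula from Remark~\ref{rem:H on morphisms} (or Remark~\ref{rem:box vs down} if one uses the free-quotient description of $\H$), $h_\Box(S) = \bigvee\{\Box a \wedge \neg\bigvee\{\Box b \mid b < a\} \mid a \in S\}$. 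One then checks $\Box_{h_\Box} = \Box$ (immediate from $h_\Box \circ \iota_{\U(A)} = \Box$) and $h_{\Box_h} = h$ (since both are $\caba$-morphisms agreeing on the generating set $\iota_{\U(A)}[A]$, using that $\L(M)$ is generated by $\iota_M[M]$ as established in the proof of Theorem~\ref{thm:alternate L}).

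Next I would verify that this bijection on objects extends to an identity-on-underlying-morphisms functor, i.e.\ that a $\caba$-morphism $\alpha : A_1 \to A_2$ is a morphism of $\H$-algebras $(A_1, h_1) \to (A_2, h_2)$ if and only if it is a $\cama$-morphism $(A_1, \Box_{h_1}) \to (A_2, \Box_{h_2})$. The $\H$-algebra condition is $\alpha \circ h_1 = h_2 \circ \H(\alpha)$. Composing on the right with $\iota_{\U(A_1)}$ and using the naturality square $\H(\alpha) \circ \iota_{\U(A_1)} = \iota_{\U(A_2)} \circ \U(\alpha)$ from Remark~\ref{rem:morphisms} gives $\alpha \circ \Box_{h_1} = \Box_{h_2} \circ \alpha$, which is the $\cama$-condition. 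For the converse, if $\alpha \circ \Box_{h_1} = \Box_{h_2} \circ \alpha$, then $\alpha \circ h_1$ and $h_2 \circ \H(\alpha)$ are both $\caba$-morphisms $\H(A_1) \to A_2$ agreeing on the generators $\iota_{\U(A_1)}[A_1]$, hence equal. This shows the two evident functors $\cama \to \Alg(\H)$ and $\Alg(\H) \to \cama$ are mutually inverse, giving the isomorphism. I expect the main obstacle to be bookkeeping: one must be careful about the two competing descriptions of $\H$ (powerset vs.\ free quotient) and invoke the right explicit formulas (Remarks~\ref{rem:H on morphisms},~\ref{rem:box vs down},~\ref{rem:morphisms}), and the verification that $h_{\Box_h} = h$ genuinely requires the ``generated by $\iota_M[M]$'' fact rather than being formal — but there is no deep difficulty, as this is the standard algebra-for-$\L\U$ argument parallel to $\ma \cong \Alg(\K)$ in~\cite{KKV04}.
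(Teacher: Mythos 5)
Your proposal is correct and follows essentially the same route as the paper: both directions of the object correspondence ($\Box \mapsto \tau_\Box$ via the universal property of Theorem~\ref{thm:alternate L}, and $\tau \mapsto \Box_\tau$ via $a \mapsto \tau({\downarrow}a)$), the same use of Remark~\ref{rem:morphisms} for the morphism conditions, and the same appeal to $\H(A)$ being generated by $\{{\downarrow}a \mid a \in A\}$ for the uniqueness checks. The only cosmetic slip is the aside about ``$\caba$-morphisms being boolean homomorphisms preserving arbitrary meets'' — the relevant fact is just that a $\csl$-morphism $\U(A)\to\U(A)$ is exactly a completely multiplicative modal operator — but this does not affect the argument.
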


\begin{proof}
Let $(A, \Box) \in \cama$. 
Since $\Box : A \to A$ is a $\csl$-morphism, by Theorem~\ref{thm:alternate L}, there is a unique $\caba$-morphism $\tau_\Box : \H(A) \to A$ such that $\tau_\Box(\down a) = \Box a$ for each $a \in A$. Therefore, $(A, \tau_\Box) \in \Alg(\H)$. Let $\alpha : A_1 \to A_2$ be a $\cama$-morphism and $a \in A_1$. Since $\alpha(\Box_1 a)=\Box_2 \alpha(a)$, by Remark~\ref{rem:morphisms},
\[
\tau_{\Box_2} \H(\alpha)(\down a) = \tau_{\Box_2}(\down {\alpha(a)}) = \Box_2 {\alpha(a)} = \alpha(\Box_1 a) = \alpha\tau_{\Box_1}(\down a).
\]
Because $\H(A)$ is generated by $\{ \down a \mid a \in A\}$, we obtain that $\tau_{\Box_2} \circ \H(\alpha) = \alpha \circ \tau_{\Box_1}$. Therefore, $\alpha$ is also a morphism in $\Alg(\H)$. This defines a covariant functor $\A : \cama \to \Alg(\H)$.

Conversely, let $(A,\tau) \in \Alg(\H)$ so $A \in \caba$ and $\tau:\H(A) \to A$ is a $\caba$-morphism. If we define $\Box_\tau$ on $A$ by $\Box_\tau a = \tau(\down a)$, it is easy to see that $\Box_\tau$ is completely multiplicative, so $(A, \Box_\tau) \in \cama$. Let $\alpha : A_1 \to A_2$ be a morphism in $\Alg(\H)$ and $a \in A_1$. By Remark~\ref{rem:morphisms},
\[
\Box_{\tau_2} \alpha(a) = \tau_2(\down {\alpha(a)}) = \tau_2 \H(\alpha)(\down a) = \alpha\tau_1(\down a) = \alpha(\Box_{\tau_1} a).
\]
Therefore, $\alpha$ is also a $\cama$-morphism. This defines a covariant functor $\M :  \Alg(\H) \to \cama$.

Let $(A, \Box) \in \cama$. For $a \in A$, we have $\Box_{\tau_\Box}a = \tau_\Box(\down a) = \Box a$. Therefore, $\Box_{\tau_\Box} = \Box$.  Next, let $(A, \tau) \in \Alg(\H)$. For $a \in A$, we have $\tau_{\Box_\tau}(\down a) = \Box_\tau a = \tau(\down a)$. Since $\H(A)$ is generated by $\{ \down a \mid a \in A \}$, we obtain that $\tau_{\Box_\tau}  = \tau$. Thus, the functors $\A$ and $\M$ yield an isomorphism of $\cama$ and $\Alg(\H)$.
\end{proof}

\begin{rem}
If we use the alternative description of $\H$, then the previous theorem can be proved using Theorem~\ref{prop: H is a reflector Venema def} and Remark~\ref{rem: H on maps}. The advantage of using this description of $\H$ lies in the suggestive definitions $\tau_\Box(\Box_a) = \Box a$ and $\Box_\tau a = \tau(\Box_a)$.
\end{rem}

We are ready to lift Tarski duality to a dual equivalence between $\Alg(\H)$ and $\Coalg(\mathcal{P})$. For this we utilize 
\cite[Thm.~2.5.9]{Jac17} which states that, under certain conditions, adjunctions lift to adjunctions between categories of algebras. For our purposes, we require the following reformulation of 
\cite[Thm.~2.5.9]{Jac17} for dual equivalences.

\begin{lem}\label{lem: Jacobs}
Let $\mathbb{S}:\sf{C} \to \sf{C}$, $\mathbb{T}:\sf{D} \to \sf{D}$ be two endofunctors and $\mathbb{Q}:\sf{C} \to^\op \sf{D}$, $\mathbb{R}:\sf{D} \to^\op \sf{C}$ two contravariant functors forming a dual equivalence. Suppose that $\mathbb{S}\mathbb{R}$ and $\mathbb{R}\mathbb{T}$ are naturally isomorphic \emph{(}and hence so are $\mathbb{T}\mathbb{Q}$ and $\mathbb{Q}\mathbb{S}$\emph{)}. Then $\mathbb{Q}$ and $\mathbb{R}$ lift to contravariant functors
$\widehat{\mathbb{Q}}:\Alg(\mathbb{S}) \to^\op \Coalg(\mathbb{T})$ and $\widehat{\mathbb{R}}:\Coalg(\mathbb{T}) \to^\op \Alg(\mathbb{S})$ 
which yield a dual equivalence between $\Alg(\mathbb{S})$ and $\Coalg(\mathbb{T})$.
\[
\begin{tikzcd}[column sep = 5pc] 
\sf{C} \arrow[r,shift left = .5ex, "\mathbb{Q}"] \arrow[d, "\mathbb{S}"'] & \sf{D} \arrow[d, "\mathbb{T}"] \arrow[l, shift left = .5ex, "\mathbb{R}"] \\
\sf{C} \arrow[r,shift left = .5ex, "\mathbb{Q}"] & \sf{D} \arrow[l, shift left = .5ex, "\mathbb{R}"]
\end{tikzcd}
\]
\end{lem}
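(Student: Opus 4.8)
\emph{Strategy.} I would deduce this from \cite[Thm.~2.5.9]{Jac17}, which lifts an adjunction along a pair of functors that commute up to isomorphism with two endofunctors to an adjunction between the corresponding categories of algebras. The only things to arrange are the passage from the covariant ``algebra'' setting of that theorem to our contravariant ``coalgebra'' one, and the observation that an \emph{equivalence} lifts to an \emph{equivalence}. Concretely, I would replace $\sf{D}$ by $\sf{D}^{\op}$. Then the dual equivalence $(\mathbb{Q},\mathbb{R})$ becomes a genuine (adjoint) equivalence $\sf{C}\simeq\sf{D}^{\op}$, and $\mathbb{T}$ induces an endofunctor $\mathbb{T}^{\op}$ on $\sf{D}^{\op}$ ($\mathbb{T}^{\op}B=\mathbb{T}B$ on objects, $\mathbb{T}^{\op}(h^{\op})=(\mathbb{T}h)^{\op}$ on morphisms). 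A short check shows $\Coalg(\mathbb{T})\cong\Alg(\mathbb{T}^{\op})^{\op}$: a coalgebra $g:B\to\mathbb{T}B$ is exactly an algebra $g^{\op}:\mathbb{T}^{\op}B\to B$, and a coalgebra morphism $\alpha$ is exactly an algebra morphism $\alpha^{\op}$ in the reverse direction. The hypothesis that $\mathbb{S}\mathbb{R}$ and $\mathbb{R}\mathbb{T}$ are naturally isomorphic says precisely that the square of \cite[Thm.~2.5.9]{Jac17} (with $\sf{D}$ replaced by $\sf{D}^{\op}$ and $\mathbb{T}$ by $\mathbb{T}^{\op}$) commutes up to isomorphism. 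That theorem then gives lifted functors $\Alg(\mathbb{S})\leftrightarrows\Alg(\mathbb{T}^{\op})$ forming an adjunction whose unit and counit are built from those of $(\mathbb{Q},\mathbb{R})$, hence are isomorphisms; so the lifted adjunction is an equivalence, and transporting it back along $\Coalg(\mathbb{T})\cong\Alg(\mathbb{T}^{\op})^{\op}$ produces the contravariant functors $\widehat{\mathbb{Q}}:\Alg(\mathbb{S})\to^{\op}\Coalg(\mathbb{T})$ and $\widehat{\mathbb{R}}:\Coalg(\mathbb{T})\to^{\op}\Alg(\mathbb{S})$ and the dual equivalence.

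If instead a self-contained argument is wanted, I would build the lift directly. Fix a natural isomorphism $\phi:\mathbb{S}\mathbb{R}\xrightarrow{\sim}\mathbb{R}\mathbb{T}$ and let $\psi:\mathbb{Q}\mathbb{S}\xrightarrow{\sim}\mathbb{T}\mathbb{Q}$ be its mate under the dual equivalence, so that $\psi_A^{-1}$ is the pasting $\mathbb{T}\mathbb{Q}A\to\mathbb{Q}\mathbb{R}\mathbb{T}\mathbb{Q}A\to\mathbb{Q}\mathbb{S}\mathbb{R}\mathbb{Q}A\to\mathbb{Q}\mathbb{S}A$ built from the unit isomorphism $\kappa:1_{\sf D}\to\mathbb{Q}\mathbb{R}$, from $\mathbb{Q}\phi_{\mathbb{Q}A}$, and from $\mathbb{Q}\mathbb{S}\eta_A$ where $\eta:1_{\sf C}\to\mathbb{R}\mathbb{Q}$ is the other unit isomorphism. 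Set $\widehat{\mathbb{Q}}(A,f)=(\mathbb{Q}A,\,\psi_A\circ\mathbb{Q}f)$ and $\widehat{\mathbb{R}}(B,g)=(\mathbb{R}B,\,\mathbb{R}g\circ\phi_B)$, with $\widehat{\mathbb{Q}},\widehat{\mathbb{R}}$ acting as $\mathbb{Q},\mathbb{R}$ on morphisms; these targets are a $\mathbb{T}$-coalgebra and an $\mathbb{S}$-algebra respectively since $\mathbb{Q}f:\mathbb{Q}A\to\mathbb{Q}\mathbb{S}A$ and $\mathbb{R}g:\mathbb{R}\mathbb{T}B\to\mathbb{R}B$. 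That $\widehat{\mathbb{Q}}$ and $\widehat{\mathbb{R}}$ are well-defined contravariant functors is a two-step diagram chase using contravariant functoriality of $\mathbb{Q}$ (resp.\ $\mathbb{R}$), the defining square of a morphism in $\Alg(\mathbb{S})$ (resp.\ $\Coalg(\mathbb{T})$), and naturality of $\psi$ (resp.\ $\phi$). One then checks that $\eta_A$ is an $\mathbb{S}$-algebra morphism $(A,f)\to\widehat{\mathbb{R}}\widehat{\mathbb{Q}}(A,f)$ and symmetrically that $\kappa_B$ is a $\mathbb{T}$-coalgebra morphism $(B,g)\to\widehat{\mathbb{Q}}\widehat{\mathbb{R}}(B,g)$; since $\eta,\kappa$ are isomorphisms whose inverses are then structure-preserving as well, they assemble into natural isomorphisms $1_{\Alg(\mathbb{S})}\cong\widehat{\mathbb{R}}\widehat{\mathbb{Q}}$ and $1_{\Coalg(\mathbb{T})}\cong\widehat{\mathbb{Q}}\widehat{\mathbb{R}}$.

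The only non-formal point, and the step I expect to be the main obstacle in the direct approach, is verifying that $\eta_A$ is structure-preserving: unwinding the structure map of $\widehat{\mathbb{R}}\widehat{\mathbb{Q}}(A,f)$ and cancelling $\mathbb{R}(\mathbb{Q}f)$ by naturality of $\eta$ at $f$, this reduces to the identity $\mathbb{R}(\psi_A^{-1})\circ\eta_{\mathbb{S}A}=\phi_{\mathbb{Q}A}\circ\mathbb{S}\eta_A$, which is exactly what the mate definition of $\psi$ packages and follows from the triangle identities of the dual equivalence. This is also why the parenthetical ``(and hence so are $\mathbb{T}\mathbb{Q}$ and $\mathbb{Q}\mathbb{S}$)'' must be read as \emph{fixing} $\psi$ to be this mate of $\phi$: for an arbitrary natural isomorphism $\mathbb{Q}\mathbb{S}\cong\mathbb{T}\mathbb{Q}$ the round trips need not be identities. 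In the $\sf{D}^{\op}$ route there is no obstacle beyond the bookkeeping of the identification $\Coalg(\mathbb{T})\cong\Alg(\mathbb{T}^{\op})^{\op}$, which makes that the route I would actually take.
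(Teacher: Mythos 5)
Your proposal is correct, and your first route is exactly what the paper does: the lemma is stated without proof as a reformulation of \cite[Thm.~2.5.9]{Jac17}, so passing to $\sf{D}^{\op}$, identifying $\Coalg(\mathbb{T})$ with $\Alg(\mathbb{T}^{\op})^{\op}$, and observing that the lifted adjunction inherits invertible unit and counit (since the forgetful functors reflect isomorphisms) is precisely the intended derivation. Your self-contained second argument is also sound, and the caveat that $\psi:\mathbb{Q}\mathbb{S}\to\mathbb{T}\mathbb{Q}$ must be taken to be the mate of $\phi$ rather than an arbitrary natural isomorphism is a genuine point that the paper's parenthetical ``(and hence so are $\mathbb{T}\mathbb{Q}$ and $\mathbb{Q}\mathbb{S}$)'' leaves implicit.
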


Theorem~\ref{thm:Hwp=wpP} and Lemma~\ref{lem: Jacobs} then immediately give:

\begin{thm}\label{cor: main}
Tarski duality between $\caba$ and $\Set$ lifts to a dual equivalence between $\Alg(\H)$ and $\Coalg(\mathcal{P})$.
\end{thm}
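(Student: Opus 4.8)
The plan is to invoke Lemma~\ref{lem: Jacobs} with the concrete choices $\sf{C}=\caba$, $\sf{D}=\Set$, $\mathbb{S}=\H$, $\mathbb{T}=\mathcal{P}$, and with $\mathbb{Q}=\At$ and $\mathbb{R}=\wp$ the two contravariant functors of Tarski duality. By Tarski duality, $\At$ and $\wp$ form a dual equivalence between $\caba$ and $\Set$, so the only hypothesis of Lemma~\ref{lem: Jacobs} that needs checking is that $\mathbb{S}\mathbb{R}=\H\circ\wp$ and $\mathbb{R}\mathbb{T}=\wp\circ\mathcal{P}$ are naturally isomorphic. But this is exactly the content of Theorem~\ref{thm:Hwp=wpP}(1), which in fact gives the stronger statement $\H\circ\wp=\wp\circ\mathcal{P}$ (an equality of functors, hence a fortiori a natural isomorphism via the identity). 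Consequently the hypotheses of Lemma~\ref{lem: Jacobs} are met, and the lemma produces contravariant functors $\widehat{\At}:\Alg(\H)\to^\op\Coalg(\mathcal{P})$ and $\widehat{\wp}:\Coalg(\mathcal{P})\to^\op\Alg(\H)$ yielding a dual equivalence between $\Alg(\H)$ and $\Coalg(\mathcal{P})$. Since the natural isomorphisms witnessing this dual equivalence are obtained by restricting the units $\varepsilon$ and $\vartheta$ of Tarski duality to the carriers of algebras and coalgebras, the dual equivalence is literally a lift of Tarski duality, which is what the statement asserts.

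Concretely, I would spell out the two lifted functors so the reader sees the lift explicitly. Given an algebra $(A,\tau)\in\Alg(\H)$, the morphism $\At(\tau):\At(A)\to\At\H(A)$ composed with (the inverse of) the natural isomorphism $\zeta_A:\At\H(A)\to\mathcal{P}\At(A)$ of Remark~\ref{cor:AtH and PAt naturally iso} gives a map $\At(A)\to\mathcal{P}\At(A)$, i.e.\ a $\mathcal{P}$-coalgebra structure on $\At(A)$; this defines $\widehat{\At}(A,\tau)$, and on morphisms $\widehat{\At}$ acts as $\At$ does. Dually, given a coalgebra $(X,\rho)\in\Coalg(\mathcal{P})$, applying $\wp$ to $\rho:X\to\mathcal{P}(X)$ and precomposing with the identification $\wp\mathcal{P}(X)=\H\wp(X)$ of Theorem~\ref{thm:Hwp=wpP}(1) gives a $\caba$-morphism $\H\wp(X)\to\wp(X)$, i.e.\ an $\H$-algebra structure on $\wp(X)$; this defines $\widehat{\wp}(X,\rho)$, and on morphisms $\widehat{\wp}$ acts as $\wp$ does. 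Routine diagram chases — which are precisely the ones packaged into the proof of Lemma~\ref{lem: Jacobs} — show these are well defined on morphisms and mutually quasi-inverse.

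I do not anticipate any real obstacle: the theorem is designed to be the immediate payoff of the machinery assembled in Theorem~\ref{thm:Hwp=wpP} and Lemma~\ref{lem: Jacobs}, and the proof is a one-line invocation. The only point requiring the slightest care is checking that Theorem~\ref{thm:Hwp=wpP} supplies the natural isomorphism $\mathbb{S}\mathbb{R}\cong\mathbb{R}\mathbb{T}$ in the direction Lemma~\ref{lem: Jacobs} expects; since part~(1) is an equality and part~(2) identifies $\At\circ\H\cong\mathcal{P}\circ\At$ (the ``hence so are $\mathbb{T}\mathbb{Q}\cong\mathbb{Q}\mathbb{S}$'' clause), both directions are available, so there is nothing to worry about. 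Combining Theorem~\ref{cor: main} with Theorems~\ref{thm: KFr} and~\ref{thm: cama} (via $\Coalg(\mathcal{P})\cong\KFr$ and $\Alg(\H)\cong\cama$) then recovers Thomason duality as a corollary, completing the parallel with how J\'onsson-Tarski duality is obtained from Stone duality.
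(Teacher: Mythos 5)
Your proposal is correct and follows exactly the paper's own route: the paper proves this theorem by the one-line observation that Theorem~\ref{thm:Hwp=wpP} supplies the natural isomorphism $\H\circ\wp\cong\wp\circ\mathcal{P}$ required by Lemma~\ref{lem: Jacobs}, which then yields the dual equivalence. Your explicit descriptions of $\widehat{\At}$ and $\widehat{\wp}$ match what the paper records separately in Remark~\ref{rem: 4.11}.
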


\begin{rem} \label{rem: 4.11}
By adapting the proof of~\cite[Thm.~2.5.9]{Jac17}, the contravariant functors $\widehat{\At} : \Alg(\H) \to^\op \Coalg(\mathcal{P})$ and $\widehat{\wp} :\Coalg(\mathcal{P}) \to^\op \Alg(\H)$ lifting $\At:\caba \to^\op \Set$ and $\wp:\Set \to^\op \caba$ can be defined as follows: 

If $(A, f) \in \Alg(\H)$, then 
$(\At(A), \zeta_A \circ \At(f))\in \Coalg(\mathcal{P})$ where $\zeta_A$ is defined in Remark~\ref{cor:AtH and PAt naturally iso}. Moreover, if $\alpha : A_1 \to A_2$ is a morphism in $\Alg(\H)$, then $\At(\alpha)$ is a morphism of the corresponding coalgebras. This defines the functor $\widehat{\At} : \Alg(\H) \to^\op \Coalg(\mathcal{P})$.
\[
\begin{tikzcd}[column sep = 5pc]
\At(A_2) \arrow[d, "\At(\alpha)"'] \arrow[r, "\zeta_{A_2} \circ \At(f_2)"] & \mathcal{P}\At(A_2) \arrow[d, "\mathcal{P}\At(\alpha)"]\\
\At(A_1) \arrow[r, "\zeta_{A_1} \circ \At(f_1)"'] & \mathcal{P}\At(A_1)
\end{tikzcd}
\]

Let $(X,g) \in \Coalg(\mathcal{P})$. Then 
$\wp(g): \wp\mathcal{P}(X) \to \wp(X)$ is a $\caba$-morphism. Since $\H \circ \wp = \wp\circ\mathcal{P}$ (see Theorem~\ref{thm:Hwp=wpP}(1)), we have that $(\wp(X), \wp(g)) \in \Alg(\H)$. Moreover, if $h:(X_1,g_1) \to (X_2,g_2)$ is a $\Coalg(\mathcal{P})$-morphism, then $\wp(h)$ is an $\Alg(\H)$-morphism. This defines the functor $\widehat{\wp}:\Coalg(\mathcal{P}) \to^\op \Alg(\H)$.
\[
\begin{tikzcd}[column sep = 5pc]
\H\wp(X_2)=\wp \mathcal{P}(X_2) \arrow[r, "\wp(g_2)"] \arrow[d, shift left=12, "\wp \mathcal{P}(h)"] \arrow[d, shift right=7, "\H \wp(h)"'] & \wp(X_2) \arrow[d, "\wp(h)"] \\
\H\wp(X_1)=\wp \mathcal{P}(X_1) \arrow[r, "\wp(g_1)"'] & \wp(X_1)
\end{tikzcd}
\]
\end{rem}

\begin{rem}
Putting Theorems~\ref{thm: KFr}, \ref{thm: cama}, and \ref{cor: main} together yields an alternate proof of Thomason duality. 
We recall that the contravariant functors establishing Thomason duality extend the contravariant functors of Tarski duality. Namely, the functor $\wp:\KFr\to^\op\cama$ associates to each $(X,R) \in \KFr$ the algebra $(\wp(X), \Box_R) \in \cama$ where $\Box_R$ is defined by
\[
  \Box_R(S)=\{ x \in X \mid R[x] \subseteq S \}.\footnote{Thus, $\Box_R$ is the predicate lifting for $\Box$ composed with the coalgebra map $\rho_R : X \to \mathcal{P}(X)$.}
\] 
Also, $\wp$ associates to each $\KFr$-morphism $f:X\to Y$ the $\cama$-morphism $\wp(f):\wp(Y)\to\wp(X)$ given by $\wp(f)=f^{-1}$. 
The functor $\At:\cama\to^\op\KFr$ associates to each $(A,\Box) \in \cama$ the Kripke frame $(\At(A),R_\Box)$ where 
\[
xR_\Box y \mbox{ iff } x \wedge \Box\neg y=0 \mbox{ iff } (\forall a \in A) (x \le \Box a \Rightarrow y \le a).
\] 
Also, $\At$ associates to each $\cama$-morphism $\alpha:A\to B$ the p-morphism $\At(\alpha) : \At(B)\to\At(A)$ given by $\At(\alpha)=\alpha^*$. 

We show that $\wp : \KFr \to^\op \cama$ is the composition
\[
\begin{tikzcd}
\KFr \arrow[r, "\C"] & \Coalg(\mathcal{P}) \arrow[r, "\widehat{\wp}"] & \Alg(\H) \arrow[r, "\M"] & \cama
\end{tikzcd}
\]
and that $\At : \cama \to^\op \KFr$ is the composition
\[
 \begin{tikzcd}
 \cama \arrow[r, "\A"] & \Alg(\H) \arrow[r, "\widehat{\At}"] & \Coalg(\mathcal{P}) \arrow[r, "\F"] & \KFr.
 \end{tikzcd}
 \]

Let $(X,R) \in \KFr$. Then 
\[
\M \, \widehat{\wp} \, \C(X,R) = \M \, \widehat{\wp} \, (X, \rho_R) =\M (\wp(X), \wp(\rho_R))=(\wp(X), \Box_{\wp(\rho_R)}).
\]
For $S \subseteq X$, we have
\[
\Box_{\wp(\rho_R)} S = \wp(\rho_R)(\down S)=\rho_R^{-1}({\downarrow} S)=\{ x \in X \mid \rho_R(x) \subseteq S \} = \{ x \in X \mid R[x] \subseteq S \} = \Box_R S.
\]
Thus, $\Box_{\wp(\rho_R)}=\Box_R$, so $\M \, \widehat{\wp} \, \C(X,R) = (\wp(X), \Box_R)$, and hence $\M \, \widehat{\wp} \, \C = \wp$.

Let $(A,\Box)\in\cama$ . Then 
\[
\F \, \widehat{\At} \, \A(A, \Box) = \F \, \widehat{\At} \, (A, \tau_\Box)= \F (\At(A), \zeta_{A} \circ \At(\tau_\Box)) =(\At(A), R_{\zeta_{A} \circ \At(\tau_\Box)})
\]
For $x,y \in \At(A)$, we have 
\[
x R_{\zeta_{A} \circ \At(\tau_\Box)} y \ \mbox{ iff } \ y \in \zeta_{A}\At(\tau_\Box)(x) \ \mbox{ iff } \ y \in \zeta_A((\tau_\Box)^*(x)).
\]
By Remark~\ref{rem:H on morphisms}, for $a \in A$, we have $\tau_\Box(\{ a \})= \Box a \wedge \neg \bigvee \{ \Box b \mid b < a \}$. 
Therefore, for each $x \in \At(A)$, we have 
\[
x \le \tau_\Box(\{ a \}) \mbox{ iff } x \le \Box a \mbox{ and } x \nleq \Box b \mbox{ for each } b < a.
\] 
Since $\Box$ is completely multiplicative, $c:=\bigwedge \{ a \in A \mid x \le \Box a \}$ is the least element satisfying $x \le \Box c$. Thus, $x \le \tau_\Box(\{ c \})$. Consequently, $(\tau_\Box)^*(x)=\{ c \}$
since $(\tau_\Box)^*$ is left adjoint to $\tau_\Box$ and both $(\tau_\Box)^*(x)$ and 
$\{ c \}$
 are atoms of $\H(A)$.
It follows that
\[
y \in \zeta_A((\tau_\Box)^*(x)) \ \mbox{iff} \ y \le \bigwedge \{ a \in A \mid x \le \Box a \} \ \mbox{iff} \ (\forall a \in A) (x \le \Box a \Rightarrow y \le a).
\]
Thus, $x R_{\zeta_{A} \circ \At(\tau_\Box)} y$ iff $x R_\Box y$, so $\F \, \widehat{\At} \, \A(A, \Box) = \At(A,\Box)$, and hence $\F \, \widehat{\At} \, \A = \At$.
\end{rem} 

\begin{rem}\label{rem: canonical ext on algebras}
We conclude the paper by connecting the coalgebraic approaches to J\'onsson-Tarski and Thomason dualities. As follows from~\cite{KKV04}, $\Alg(\K)$ is dually equivalent to $\Coalg(\V)$, from which J\'onsson-Tarski duality follows. By Theorem~\ref{cor: main}, $\Alg(\H)$ is dually equivalent to $\Coalg(\mathcal P)$, from which Thomason duality follows. 

Let $\U:\Stone \to \Set$ be the forgetful functor. For each $X \in \Stone$, viewing the underlying set of the Vietoris space $\V(X)$ as a subset of $\mathcal P(X)$, we have an inclusion map $i : \U\V(X) \to \mathcal{P}\U(X)$. We extend $\U$ to a forgetful functor on the level of coalgebras. Let $(X,g) \in \Coalg(\V)$, so $g:X \to \V(X)$ is a continuous map. Set $\U(X,g):=(\U(X), g')$ where $g' : \mathcal{U}(X) \to \mathcal{PU}(X)$ is given by $g'=i \circ \U(g)$.
\[
\begin{tikzcd}
\U(X) \arrow[rr, bend right = 18, "g'"'] \arrow[r, "\U(g)"] & \U\V(X) \arrow[r, "i"] &  \mathcal{P}\U(X)
\end{tikzcd}
\]
Then $(\U(X), g') \in \Coalg(\mathcal{P})$. If $\alpha:(X_1,g_1)\to(X_2,g_2)$ is a $\Coalg(\V)$-morphism, it is straightforward to see that $\U(\alpha):(\U(X_1),g_1')\to(\U(X_2),g_2')$ is a $\Coalg(\mathcal{P})$-morphism. This yields a functor $\U : \Coalg(\V) \to \Coalg(\mathcal{P})$. If we identify $X$ with $\U(X)$ and $g : X \to \V(X)$ with $g' : \U(X) \to \wp\U(X)$, then $\U(X,g) = (X,g)$. We have thus forgotten the topological structure of $X$ and the continuity of $g$. This justifies thinking about $\U : \Coalg(\V) \to \Coalg(\mathcal{P})$ as a forgetful functor.

We next define a functor $(-)^\sigma : \Alg(\K) \to \Alg(\H)$ which can be thought of as the canonical extension functor for algebras for $\K$. Use Lemma~\ref{lem: Jacobs} to lift the contravariant functors of Stone duality to $\widehat{\uf},\widehat{\Clop}$ and those of Tarski duality to $\widehat{\At},\widehat{\wp}$ and set $(-)^\sigma = \widehat{\wp} \circ \U \circ \widehat{\uf}$.
Thus, $(-)^\sigma \circ \widehat{\Clop}$ and $\widehat{\wp}\circ \U$ are naturally isomorphic, and hence so are $\U\circ \widehat{\uf}$ and $\widehat{\At} \circ (-)^\sigma$.

\[
\begin{tikzcd}[column sep = 5pc]
\Alg(\K) \arrow[r, shift right = .5ex, "\widehat{\uf}"'] \arrow[d, "(-)^\sigma"'] & \Coalg(\V) \arrow[d, "\U"] \arrow[l, shift right = .5ex, "\widehat{\Clop}"'] \\
\Alg(\H) \arrow[r, shift right = .5ex, "\widehat{\At}"'] & \Coalg(\mathcal{P}) \arrow[l, shift right = .5ex, "\widehat{\wp}"']
\end{tikzcd}
\]

The functor $\K$ satisfies $\uf\K(A) \cong \V\uf(A)$~\cite[Cor.~3.11]{KKV04}. By identifying these two Stone spaces, for $(A, \alpha) \in \Alg(\K)$ we describe $(A, \alpha)^\sigma$ directly. Since $\alpha : \K(A) \to A$ is a $\ba$-morphism, $\uf(\alpha) : \uf(A) \to \uf\K(A) = \V\uf(A)$ is a continuous map. Therefore, $(\uf(A), \uf(\alpha)) \in \Coalg(\V)$. The forgetful functor (after appropriate identifications) sends this to $(\uf(A), \uf(\alpha)) \in \Coalg(\mathcal{P})$. Finally, $\widehat{\wp}$ sends $(\uf(A), \uf(\alpha))$ to $(\wp\uf(A), \wp\uf(\alpha)) = (A^\sigma, \alpha^\sigma)$. Because of these calculations, we can view the functor $\Alg(\K) \to \Alg(\H)$ as the canonical extension functor for algebras for $\K$.
\end{rem}

\section*{Acknowledgements}

We would like to thank the referees for their comments. One of the referees suggested to us an alternative description of $\L$ given in Theorem~\ref{thm:alternate L}. We also thank Nick Bezhanishvili, Jim de Groot, Sebastian Enqvist, Clemens Kupke, Phil Scott, and Yde Venema for their comments on an earlier version of the paper.

\bibliographystyle{alphaurl}
\bibliography{../Gelfand}

\end{document}